\newcommand{\mrm}[1]{\mathrm{#1}}
\newcommand{\Ent}{\mathrm{Ent}}
\DeclareMathOperator*{\argmin}{arg\,min}
\DeclareMathOperator{\var}{var}
\DeclareMathOperator{\Var}{\mathbb{V}ar}
\newcommand{\bigO}{\mathcal{O}}
\newcommand{\E}{\mathbb{E}}
\newcommand{\R}{\mathbb{R}}
\newcommand{\Z}{\mathbb{Z}}
\renewcommand{\P}{\mathbb{P}}
\renewcommand{\L}{\mathbb{L}}
\newcommand{\calA}{\mathcal{A}}
\newcommand{\calC}{\mathcal{C}}
\newcommand{\calP}{\mathcal{P}}
\newcommand{\eps}{\varepsilon}
\newcommand{\ph}{\varphi}
\newcommand{\Proba}{\mathcal{P}}
\newcommand{\e}{\mathrm{e}}
\renewcommand{\d}{\mathrm{d}}
\newcommand{\eqdef}{ \dps \mathop{=}^{{\rm def}} }
\newcommand{\one}{ \, {\rm l} \hspace{-.7 mm} {\rm l}}
\newcommand{\dps}{\displaystyle}
\newcommand{\abs}[1]{\left | #1\right |}
\newcommand{\set}[1]{\left\{#1\right\}}
\newcommand{\p}[1]{ \left(#1\right) }
\renewcommand{\b}[1]{\left [ #1\right ]}
\theoremstyle{plain}
\newtheorem{The}{Theorem}[section]
\newtheorem{Lem}[The]{Lemma}
\newtheorem{Pro}[The]{Proposition}
\newtheorem{Cor}[The]{Corollary}
\newtheorem*{Ass*}{Assumption}
\newtheorem{Rem}[The]{Remark}
\newtheorem{Exa}[The]{Example}
\numberwithin{equation}{section}
\theoremstyle{definition}
\title{Entropy minimizing distributions are worst-case optimal importance proposals}
\date{}
\begin{document}

\author[1]{Fr\'ed\'eric C\'erou}
\author[1]{Patrick H\'eas}
\author[1]{Mathias Rousset}


\affil[1]{IRMAR and Inria, University of Rennes, France.}

\maketitle
\abstract{Importance sampling of target probability distributions belonging to a given convex class is considered. Motivated by previous results, the cost of importance sampling is quantified using the relative entropy of the target with respect to proposal distributions. Using a reference measure as a reference for cost, we prove under some general conditions that the worst-case optimal proposal is precisely given by the distribution minimizing entropy with respect to the reference within the considered convex class of distributions. The latter conditions are in particular satisfied when the convex class is defined using a push-forward map defining atomless conditional measures. Applications in which the optimal proposal is Gibbsian and can be practically sampled using Monte Carlo methods are discussed.}

\tableofcontents

\section{Introduction}
\emph{Importance Sampling} (IS) is a generic Monte Carlo methodology that aims at computing averages with respect to a given probability distribution $\eta \in \Proba(E)$ in a state space $E$, usually called \emph{the target distribution}, by using weighted samples distributed according to a different distribution $\mu\in \Proba(E)$, usually called the \emph{proposal distribution}.\medskip 

A a general concept, IS is at the basis of most Monte Carlo strategies ever since its introduction in computational statistical physics in the early fifties. The non-expert reader may consult the second chapter of the monograph~\cite{liu2001} or the review paper~\cite{tokdar2010importance} as an introduction. One well-known problem is the lack of robustness in the choice of the proposal distribution which leads to the degeneracy of the importance weights, especially in high dimension (see \textit{e.g.} \cite{au2003important}). Motivated in part by this issue, a considerable amount of sophisticated strategies incorporating IS have been developed. Broadly speaking, two type of ideas have emerged in all trends. First, one can smooth the sampling task by considering a pre-defined family or flow of targets and then perform sampling sequentially by starting with the easiest ones. Second, on may try to optimize the choice of the proposal, using some information on the model, or previous sampling attempts. These ideas are developed both in Sequential Monte Carlo (a.k.a. Particle Filters) methods in which the proposal itself is known up to a normalization and sampled iteratively (classical papers include~\cite{neal2001annealed,del2006sequential}), or in methods relying on a proposals in a parametric family with some adaptive features (mainly by optimization of the proposal based on empirical estimation of the 'cross-entropy' of the target \cite{rubinstein2004cross,douc2007convergence}).   \medskip

All the above mentioned methods do suggest the following question: what is the optimal proposal associated to a given target distribution, or to a given subset of target distributions ? \medskip

In many applications, the target $\eta$ is given through a non-normalized density with respect to a reference distribution $\pi \in \Proba(E)$; $\pi$ being very easy to sample. The present work is motivated by the following additional context: the evaluation of the density defining $\eta$ is computationally very expensive, but the user knows \emph{a priori} that $\eta$  belongs to a known \emph{convex} class of 'admissible' distributions denoted $\calC \subset \Proba(E)$, which is described in a much more simple way. For instance, $\calC$ may be rigorously given by bounds on the considered model. Another scenario may occur when preliminary attempts of sampling the target $\eta$ have been performed, yielding a confidence set $\calC$ to which the true target belongs with a very high probability. \medskip

We will address and give a quite generic answer to the following problem: which importance proposal $\mu$ is (worst-case) optimal when the set of admissible target distributions is $\calC$ ? The results will be stated for a subset of admissible target distributions $\calA \subset \calC$ with appropriate properties, but we will restrict -- without significant loss of generality -- in the present introduction to the case $\calA = \calC$ for expository purpose .\medskip

The first choice we have to make is the choice of the \emph{cost} of performing importance sampling. A way to do that is through the \emph{required sample size} $N^\ast$ of i.i.d. $\mu$-distributed variables. It can be defined as the sample size required to yield at least a $\delta > 0$ accuracy with probability $1-p_\alpha$ when estimating properly normalized test functions; $\delta,p_\alpha$ being given. Instead of using the classical Chebyshev lower bound to estimate $N^\ast$ using variance, we will rather use the relative entropy (Kullback-Leibler divergence):
\begin{equation}\label{eq:cost}
\ln N^\ast \simeq \Ent(\eta \mid \mu).
\end{equation}
As will be discussed in Section~\ref{sec:sample_size}, some \emph{lower and upper} bounds estimates are given in~\cite{ChaDia} which show that, under some uniform tail conditions on the density $\eta/\mu$, \eqref{eq:cost} is a rigorous equivalent when $\Ent(\eta \mid \mu)$ becomes large. This is not the case of variance, as will be demonstrated in Section~\ref{sec:example}. Some care is needed however with this argument: the estimates are not sharp and quite inaccurate for non-asymptotic practical purpose. In this paper, we will reformulate and comment the results of~\cite{ChaDia} in Section~\ref{sec:sample_size}, and then simply proceed with our analysis using relative entropy as a \emph{definition} of the logarithmic cost of importance sampling. Note that relative entropy is also extensively used as a cost function in the so-called \emph{cross-entropy or adaptive importance sampling methods} (\cite{rubinstein2004cross,douc2007convergence}) in order to optimize importance proposal distributions, see Section~\ref{sec:cross-ent}. Recent works have also questioned the use of variance in practical empirical estimations (the so-called Effective Sample Size) of the divergence between the target and the proposal, proposing to consider other R\'enyi entropies (see~\cite{elvira2022rethinking,martino2017effective,huggins2019sequential}, and Section~\ref{app:ChaDia} for comments on R\'enyi entropies).

Now that we have a cost functional to compare importance sampling between various proposals, one must deal with the problem that worst-case costs are usually infinite. We simply solve this issue by performing a comparison of the log-cost of importance sampling between: i) a proposal $\mu$, and ii) the reference proposal $\pi$. If $\calC \subset \Proba(E)$ is a given convex set of admissible target distributions, one is led to define the \emph{worst-case logarithmic cost} -- of importance sampling with $\mu$ as compared to importance sampling with $\pi$ -- by the quantity:
\begin{equation}\label{eq:log-cost_intro}
 \mrm{WLC}_h(\mu \mid \pi) \eqdef  \sup_{\eta \in \calC, \, \Ent(\eta \mid \pi) \leq h} \Ent( \eta \mid \mu) - h .
\end{equation}
The quantity~\eqref{eq:log-cost_intro} is the logarithm of the (normalized) worst-case cost for importance sampling with $\mu$, for targets with a given maximal reference log-cost.\medskip 

We will first prove in this paper (Theorem~\ref{th:main1}), that, under some specific assumptions, the worst-case optimal proposal distribution is unique and is given by the classical entropy minimizing distribution associated with $\calC$:
\begin{equation}\label{eq:result}
\mu_\ast = \argmin_{\mu \in \Proba(E)}  \mrm{WLC}_h(\mu \mid \pi) =  \argmin_{\mu \in \calC} \Ent( \mu \mid \pi).
\end{equation}
We will show more precisely that the optimal worst-case log-cost is given by $$\mrm{WLC}_h(\mu_\ast \mid \pi) + h  = h - \Ent(\mu_\ast \mid \pi) \quad \geq 0,$$
and that the difference with an other proposal is given by
$$
\mrm{WLC}_h(\mu \mid \pi) - \mrm{WLC}_h(\mu_\ast \mid \pi) \geq \Ent(\mu \mid \mu_\ast).
$$
The main sufficient condition ensuring these results is the following: any strict half-space (defined by measurable functions) containing the optimizer $\mu_\ast \in \calC$ must also contain a distribution $\eta \in \calC$ with prescribed relative entropy $\Ent(\eta \mid \pi)=h$ (at least up to an arbitrary precision).\medskip 

This abstract condition will be made more concrete in Theorem~\ref{th:main2}, where a sufficient setting is proposed in the case where the set admissible targets is defined through a push-forward map $T$:
$$\calC = \set{\eta: \, T \sharp \eta \in \calC_T \subset \Proba(F)},$$
$T \sharp \eta$ denoting the push-forward by any measurable map $T: E \to F $ and $\calC_T$ denoting an arbitrary convex set. A sufficient condition $(H_T)$ ensuring the abstract condition $(H)$ and the main results stated above is then: i) the conditional distribution $\pi( \, \, .  \mid T =t )$ has an atomless distribution $T \sharp \pi (\d t)$-almost everywhere, and ii) $\calC$ contains distributions defined as indicator densities with respect to $\mu_\ast$. A counterexample on a two atom discrete space is provided showing the necessity of the atomless assumption. \medskip

An important more practical example is for $T$ vector valued and $\calC=\set{\eta: \, \eta(T) \in C}$ for $C$ convex. It will be discussed in Section~\ref{sec:gibbs}. In that case the optimal proposal belongs to the Gibbs (canonical) exponential family:
$$
\mu_\ast \propto \exp( \langle \beta_\ast , T \rangle) \d \pi;
$$
and can be simulated using some Monte Carlo procedure (\textit{e.g.} Sequential Monte Carlo, or Direct), albeit in a cheapest way as compared to the target $\eta$.\medskip

Finally, it is interesting to remark that our main min-max characterization theorem is similar, albeit different, from the classical Pythagorean theorem for relative entropy in information geometry. We will recall in Section~\ref{sec:main} that the latter is equivalent to the min-max property:
\[
 \mu_\ast = \argmin_{\mu \in \Proba(E)} \sup_{\eta \in \calC}   \Ent(\eta \mid \mu ) - \Ent(\eta \mid \pi),
\]
which holds in general, without specific assumption. It will be discussed in Section~\ref{sec:intro_min} why the latter is not very satisfactory for a practical interpretation in terms of importance sampling. The main problem is that the relative log-cost $\Ent(\eta \mid \mu ) - \Ent(\eta \mid \pi)$ quantifies the \emph{relative improvement} of importance sampling by $\mu$ as compared $\pi$, and it turns out that for the optimal proposal $\mu=\mu_\ast$, the worst-case improvement is always attained for the 'cheapest' trivial target $\eta =\mu_\ast$; yet, in practice, one is not interested in improving the cheapest targets of the admissible set $\calC$. Our main results might be interpreted as a variant of this min-max formulation of the Pythagorean theorem in which the supremum  has to be reached by target distributions with large relative entropy $\Ent(\eta \mid \pi)  \gg 1$, which are relevant in importance sampling.\medskip 

The paper is structured as follows. We will recall in Section~\ref{sec:intro_min} the definition and the main properties ({\textit{e.g.} the Pythagorean theorem) of the distribution $\mu_\ast$ that minimizes entropy relative to a reference $\pi$ over a convex subset. In Section~\ref{sec:main}, we will then state and prove the two main theorems of this work, Theorem~\ref{th:main1} and Theorem~\ref{th:main2}, with a counter-example for discrete state spaces. Some comments on application to Gibbs exponential families will be presented in Section~\ref{sec:gibbs}. Finally, a reformulation of, and some comments on, the results of the reference~\cite{ChaDia} on the sample size required for importance sampling will be done in Section~\ref{sec:sample_size}.

\section*{Notation} $T \sharp \pi$ denotes the push-forward (measure image) of $\pi$ by the map $T$. $\pi(\ph) = \int \ph \d \pi$ denotes integration of test functions. $ \eta / \pi $ denotes the Radon-Nikodym derivative between two non-negative measures with domination relation $\eta \ll \pi$.

\section{Entropy minimizing distributions}\label{sec:intro_min}
In this section, we recall some basic facts about the entropy minimizing distribution $\mu_\ast$ associated with a given convex subset $\calC$ of probability distributions and a reference probability $\pi$. \medskip

\subsection{Definition and Pythagorean theorem}
Let $(E,\pi)$ denotes the pair given by a (standard Borel) state space $E$ endowed with reference probability distribution $\pi$. Let $\calC \subset \Proba(E)$ be a convex subset. For simplicity, we assume there exists an \emph{entropy minimizing distribution} (with finite entropy) associated with the pair $(\pi,\calC)$. It will be  denoted
\begin{equation}\label{eq:mini}
 \mu_\ast \eqdef \argmin_{\mu \in \calC} \Ent(\mu \mid \pi).
\end{equation}
By strict convexity of entropy, the latter is uniquely defined. A general notion of entropy minimizing distribution exists in information geometry,  where it is called \emph{information projection}~\cite{csiszar2003information}. \medskip

The `Euler-Lagrange' equation or `first-order condition' characterizing~\eqref{eq:mini} are known in information geometry as the \emph{Pythagorean theorem} for relative entropy. 
\begin{The}[Theorem~$1$,~\cite{csiszar2003information}]\label{th:entmin} Let $(E,\pi)$ be a probability space. Let $\calC \subset \Proba(E)$ be convex and contain the entropy minimizing distribution denoted $\mu_\ast$. The following condition on  $\mu \in \Proba(E)$:
\begin{equation}\label{eq:Pyth}
 \forall \eta \in \calC, \qquad \Ent(\eta \mid \pi) \geq \Ent(\eta \mid \mu) + \Ent(\mu_\ast \mid \pi),
\end{equation}
has a unique solution given by $\mu_\ast$.
\end{The}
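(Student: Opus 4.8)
The plan is to prove the two assertions of the statement separately: that $\mu_\ast$ satisfies~\eqref{eq:Pyth}, and that no other probability measure does. The second (uniqueness) is immediate, and I would dispatch it first: if $\mu\in\Proba(E)$ satisfies~\eqref{eq:Pyth}, then taking $\eta=\mu_\ast\in\calC$ gives $\Ent(\mu_\ast\mid\pi)\ge\Ent(\mu_\ast\mid\mu)+\Ent(\mu_\ast\mid\pi)$, hence $\Ent(\mu_\ast\mid\mu)\le 0$; since relative entropy between probability measures is non-negative and vanishes only when the measures coincide, $\mu=\mu_\ast$.

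For the substantive half --- that $\mu=\mu_\ast$ satisfies~\eqref{eq:Pyth} --- I would extract a first-order optimality condition from the convexity of $\calC$ and the minimality of $\mu_\ast$. Fix $\eta\in\calC$; we may assume $\Ent(\eta\mid\pi)<\infty$, for otherwise the inequality is trivial, and then $\eta\ll\pi$ and $\mu_\ast\ll\pi$. Write $f=\mu_\ast/\pi$, $g=\eta/\pi$, and, using convexity of $\calC$, set $\mu_t\eqdef(1-t)\mu_\ast+t\eta\in\calC$ for $t\in[0,1]$, with density $f+t(g-f)$. Then $\phi(t)\eqdef\Ent(\mu_t\mid\pi)$ is a finite convex function on $[0,1]$ (finiteness by convexity of $x\mapsto x\ln x$), which is minimized at $t=0$ because $\mu_\ast$ minimizes entropy over $\calC$; hence $\phi'(0^+)\ge 0$. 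Differentiating under the integral sign and using $\int(g-f)\,\d\pi=0$ and $\int f\ln f\,\d\pi=\Ent(\mu_\ast\mid\pi)$ turns this into $\int g\ln f\,\d\pi\ge\Ent(\mu_\ast\mid\pi)$. Feeding this into the compensation identity $\Ent(\eta\mid\pi)=\Ent(\eta\mid\mu_\ast)+\int g\ln f\,\d\pi$ yields $\Ent(\eta\mid\pi)\ge\Ent(\eta\mid\mu_\ast)+\Ent(\mu_\ast\mid\pi)$, which is~\eqref{eq:Pyth} for $\mu=\mu_\ast$.

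The delicate point --- and the one I expect to be the main obstacle --- is the differentiation of $\phi$ at the endpoint $t=0$, together with the accompanying integrability bookkeeping: the pointwise derivative $(1+\ln f)(g-f)$ of the integrand $t\mapsto(f+t(g-f))\ln(f+t(g-f))$ is $-\infty$ on $\{f=0\}$. The way around this is that, $\pi$-a.e., the difference quotients of the convex function $x\mapsto x\ln x$ are non-decreasing in $t>0$, hence decrease to that derivative as $t\downarrow 0$, while being dominated from above by the $\pi$-integrable function $g\ln g-f\ln f$ (of integral $\Ent(\eta\mid\pi)-\Ent(\mu_\ast\mid\pi)<\infty$); the monotone convergence theorem for decreasing sequences then gives $\phi'(0^+)=\int(1+\ln f)(g-f)\,\d\pi$. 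Finiteness of $\phi'(0^+)$ in turn forces $\eta(\{f=0\})=0$, i.e.\ $\eta\ll\mu_\ast$, so that the right-hand side of~\eqref{eq:Pyth} is meaningful, and Young's inequality $g\ln f\le g\ln g+f-g$ shows $\int g\ln f\,\d\pi$ to be a well-defined element of $[-\infty,+\infty)$ throughout. Everything else --- convexity of $\calC$, non-negativity of relative entropy (Gibbs' inequality), and the compensation identity --- is routine.
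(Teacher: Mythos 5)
Your proof is correct. Note that the paper does not actually prove this statement: it invokes it as Theorem~1 of the cited Csisz\'ar reference, and only offers a non-rigorous Remark sketching the first-order optimality condition $(\eta-\mu_\ast)(\ln \mu_\ast/\pi)\geq 0$ in a ``formal sense''. What you have written is precisely the rigorous version of that sketch (and essentially Csisz\'ar's original argument): the uniqueness half by testing \eqref{eq:Pyth} with $\eta=\mu_\ast$ (legitimate since the paper assumes $\Ent(\mu_\ast\mid\pi)<+\infty$, so the subtraction is allowed) and Gibbs' inequality; the existence half by convex interpolation $\mu_t=(1-t)\mu_\ast+t\eta\in\calC$, the inequality $\phi'(0^+)\geq 0$, and the compensation identity $\Ent(\eta\mid\pi)=\Ent(\eta\mid\mu_\ast)+\eta(\ln \mu_\ast/\pi)$. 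Your treatment of the delicate points is sound: the difference quotients of $x\mapsto x\ln x$ decrease as $t\downarrow 0$ and are dominated above by the integrable function $g\ln g-f\ln f$, so the downward monotone convergence theorem legitimizes $\phi'(0^+)=\int(1+\ln f)(g-f)\,\d\pi$; its finiteness forces $\eta\ll\mu_\ast$; and the Young-type bound $g\ln f\leq g\ln g+f-g$ makes all the integrals well defined in $[-\infty,+\infty)$ so the splitting into the compensation identity is licit. The only (harmless) imprecision is that the pointwise derivative is $-\infty$ only on $\{f=0,\,g>0\}$; on $\{f=0,\,g=0\}$ it is $0$, which changes nothing in the argument.
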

\begin{Rem} $\mu_\ast$ is also the unique distribution satisfying the more constraining condition:
\begin{equation*}
 \forall \eta \in \calC, \qquad \Ent(\eta \mid \pi) \geq \Ent(\eta \mid \mu) + \Ent(\mu \mid \pi).
\end{equation*}
\end{Rem}
\begin{Rem}
The above theorem is a kind of first-order optimality condition. To see this, let us denote by $D_\mu$ the formal (Fr\'echet) derivative on $\Proba(E)$ for the usual addition of measures. At least in a formal sense, $D_\mu \Ent( \mu \mid \pi)$ is a test function, and  if $\eta \in \calC$, the difference of probability measures $\eta - \mu$ is a tangent vector pointing inside $\calC$. Formal first-order optimality condition precisely requires that:
\begin{align*}
0 &\leq  (\eta - \mu)\p{ D_\mu \Ent( \mu \mid \pi) } = (\eta-\mu)( \ln \mu/\pi) \\ 
& = \Ent(\eta \mid \pi) -  \Ent(\eta \mid \mu) - \Ent(\mu \mid \pi), \\
\end{align*}
hence the above condition.
\end{Rem}
The following direct corollary will be useful in our proofs.
\begin{Cor} Under the assumptions of Theorem~\ref{th:entmin}, if $\eta \in \calC$ satisfies $\eta\p{\abs{\ln \frac{\mu_\ast}{\pi}} }< + \infty$, then it holds
\begin{equation*}
 \eta(\ln \frac{\mu_\ast}{\pi} ) \geq \mu_\ast(\ln \frac{\mu_\ast}{\pi}). 
\end{equation*}
\end{Cor}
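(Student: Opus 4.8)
The plan is to read the asserted inequality as nothing but the Pythagorean inequality of Theorem~\ref{th:entmin} rewritten once the relative entropies are expanded. By Theorem~\ref{th:entmin} applied with $\mu=\mu_\ast$, one has for the $\eta\in\calC$ under consideration
\[
\Ent(\eta\mid\pi)\ \geq\ \Ent(\eta\mid\mu_\ast)+\Ent(\mu_\ast\mid\pi).
\]
The two identities I want to feed into this are $\Ent(\eta\mid\pi)-\Ent(\eta\mid\mu_\ast)=\eta(\ln\frac{\mu_\ast}{\pi})$ and $\Ent(\mu_\ast\mid\pi)=\mu_\ast(\ln\frac{\mu_\ast}{\pi})$; granting them, the displayed inequality becomes exactly $\eta(\ln\frac{\mu_\ast}{\pi})\geq\mu_\ast(\ln\frac{\mu_\ast}{\pi})$, which is the claim.

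To obtain the first identity I would first observe that the hypothesis $\eta(|\ln\frac{\mu_\ast}{\pi}|)<+\infty$ forces $\eta\ll\mu_\ast$: a positive $\eta$-mass on the set $\{\,\d\mu_\ast/\d\pi=0\,\}$ would make that integral infinite. Hence $\eta\ll\mu_\ast\ll\pi$, the chain rule $\ln\frac{\d\eta}{\d\pi}=\ln\frac{\d\eta}{\d\mu_\ast}+\ln\frac{\d\mu_\ast}{\d\pi}$ holds $\eta$-almost everywhere, and --- in the case $\Ent(\eta\mid\pi)<+\infty$ relevant to the applications, where the Pythagorean inequality already forces $\Ent(\eta\mid\mu_\ast)\leq\Ent(\eta\mid\pi)<+\infty$ --- all terms are $\eta$-integrable, so integrating the chain rule against $\eta$ gives $\Ent(\eta\mid\pi)=\Ent(\eta\mid\mu_\ast)+\eta(\ln\frac{\mu_\ast}{\pi})$. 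Substituting this into the Pythagorean inequality and cancelling the finite term $\Ent(\eta\mid\mu_\ast)$ from both sides leaves $\eta(\ln\frac{\mu_\ast}{\pi})\geq\Ent(\mu_\ast\mid\pi)=\mu_\ast(\ln\frac{\mu_\ast}{\pi})$.

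The only delicate point is thus the bookkeeping with possibly infinite relative entropies: the subtraction $\Ent(\eta\mid\pi)-\Ent(\eta\mid\mu_\ast)$ and the subsequent cancellation are legitimate only once these quantities are finite, and it is precisely the integrability hypothesis $\eta(|\ln\frac{\mu_\ast}{\pi}|)<+\infty$ that turns the cross term into a well-defined finite number, so that the rearrangement is meaningful. If one prefers not to invoke Theorem~\ref{th:entmin} at all, the same conclusion follows from minimality and convexity alone: $t\mapsto\Ent\big((1-t)\mu_\ast+t\eta\mid\pi\big)$ is convex on $[0,1]$ and minimized at $t=0$, so its right derivative there is nonnegative, and --- again under the integrability hypothesis, which controls the differentiation under the integral sign --- that right derivative equals $\eta(\ln\frac{\mu_\ast}{\pi})-\mu_\ast(\ln\frac{\mu_\ast}{\pi})$; this is the rigorous form of the ``formal first-order condition'' in the Remark preceding the corollary.
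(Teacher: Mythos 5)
Your argument is correct and is essentially the paper's own: the corollary is stated there as a direct consequence of the Pythagorean inequality of Theorem~\ref{th:entmin}, obtained by exactly the chain-rule decomposition $\Ent(\eta \mid \pi) = \Ent(\eta \mid \mu_\ast) + \eta\p{\ln \frac{\mu_\ast}{\pi}}$ that the paper itself uses in the proof of Theorem~\ref{th:main1}. Your restriction of the cancellation step to the case $\Ent(\eta \mid \pi) < +\infty$, together with the absolute-continuity bookkeeping $\eta \ll \mu_\ast \ll \pi$, matches how the corollary is actually invoked in the paper, so nothing essential is missing.
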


\subsection{Min-max formulation}
It is especially of interest to the present work to reformulate the above theorem as a min-max problem as follows:
\begin{Cor}\label{cor:optim}
Under the assumptions of Theorem~\ref{th:entmin}, one has
 $$
 \mu_\ast = \argmin_{\mu \in \Proba(E)} \sup_{\eta \in \calC} \Ent(\eta \mid \mu) - \Ent(\eta \mid \pi).
 $$
 Moreover, for $\mu = \mu_\ast$, the supremum is attained by $\eta= \mu_\ast$:
 $$
 \sup_{\eta \in \calC} \Ent(\eta \mid \mu_\ast) - \Ent(\eta \mid \pi) = - \Ent(\mu_\ast \mid \pi).
 $$
\end{Cor}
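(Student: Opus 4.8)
The plan is to recognize the claimed min--max identity as a direct repackaging of the Pythagorean inequality~\eqref{eq:Pyth}. Write
\[
F(\mu) \eqdef \sup_{\eta \in \calC} \bigl(\Ent(\eta \mid \mu) - \Ent(\eta \mid \pi)\bigr)
\]
for the inner supremum. I will (i) bound $F$ from below by $-\Ent(\mu_\ast \mid \pi)$ for every $\mu \in \Proba(E)$, and (ii) show that this bound is saturated precisely at $\mu = \mu_\ast$; item (ii) simultaneously proves the ``moreover'' assertion, and the two items together give the $\argmin$ characterization.

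For (i): since $\mu_\ast \in \calC$, it is an admissible competitor in the supremum defining $F(\mu)$, so
\[
F(\mu) \;\geq\; \Ent(\mu_\ast \mid \mu) - \Ent(\mu_\ast \mid \pi) \;\geq\; -\Ent(\mu_\ast \mid \pi)
\]
by non-negativity of relative entropy (with the convention that $\Ent(\mu_\ast \mid \mu) = +\infty$ when $\mu_\ast \not\ll \mu$, so that the bound is vacuous there). Importantly, the second inequality is strict unless $\Ent(\mu_\ast \mid \mu) = 0$, i.e. unless $\mu = \mu_\ast$, by strict positivity of relative entropy --- this will yield uniqueness of the minimizer. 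For the matching upper bound at $\mu = \mu_\ast$ in (ii), I will simply invoke~\eqref{eq:Pyth} of Theorem~\ref{th:entmin} with $\mu = \mu_\ast$ there: for every $\eta \in \calC$, $\Ent(\eta \mid \pi) \geq \Ent(\eta \mid \mu_\ast) + \Ent(\mu_\ast \mid \pi)$, equivalently $\Ent(\eta \mid \mu_\ast) - \Ent(\eta \mid \pi) \leq -\Ent(\mu_\ast \mid \pi)$ (trivially true when $\Ent(\eta \mid \pi) = +\infty$). Taking the supremum over $\eta \in \calC$ gives $F(\mu_\ast) \leq -\Ent(\mu_\ast \mid \pi)$, hence $F(\mu_\ast) = -\Ent(\mu_\ast \mid \pi)$ by (i), and this value is attained at $\eta = \mu_\ast$ since $\Ent(\mu_\ast \mid \mu_\ast) = 0$. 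Combining: $\mu_\ast$ minimizes $F$ with minimal value $-\Ent(\mu_\ast \mid \pi)$, and any minimizer $\mu$ must satisfy $\Ent(\mu_\ast \mid \mu) = 0$, forcing $\mu = \mu_\ast$.

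I do not expect a genuine obstacle here: the whole statement reduces to Theorem~\ref{th:entmin} together with the elementary non-negativity and strict positivity of relative entropy. The only point deserving a word of care is the treatment of degenerate configurations in the definition of $F$ (non-domination $\eta \not\ll \mu$, infinite entropies, or the ambiguous expression $\infty - \infty$); these arise only for $\mu$ with $F(\mu) = +\infty$, which are never minimizers, so they do not affect the conclusion --- alternatively, one may restrict the supremum to $\eta$ with $\Ent(\eta \mid \pi) < +\infty$ without changing anything, since $\mu_\ast$ has finite entropy by assumption.
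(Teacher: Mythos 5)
Your proof is correct, and it differs from the paper's in one respect worth noting. The paper's argument is a pure repackaging of Theorem~\ref{th:entmin}: it observes that condition~\eqref{eq:Pyth} for a given $\mu$ is equivalent to $\sup_{\eta \in \calC} \Ent(\eta \mid \mu) - \Ent(\eta \mid \pi) \leq -\Ent(\mu_\ast \mid \pi)$, and then uses \emph{both} clauses of the theorem --- that $\mu_\ast$ satisfies \eqref{eq:Pyth} (giving the upper bound at $\mu = \mu_\ast$, attained at $\eta = \mu_\ast$) and that $\mu_\ast$ is the \emph{unique} solution of \eqref{eq:Pyth} (giving strict failure of the bound, hence $F(\mu) > -\Ent(\mu_\ast \mid \pi)$, for every $\mu \neq \mu_\ast$). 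You use \eqref{eq:Pyth} only for the upper bound at $\mu_\ast$, and obtain strictness for $\mu \neq \mu_\ast$ by the more elementary device of testing the supremum with $\eta = \mu_\ast \in \calC$ and invoking strict positivity of relative entropy; this bypasses the uniqueness clause of Cs\'iszar's theorem and, as a bonus, yields the quantitative gap $F(\mu) \geq F(\mu_\ast) + \Ent(\mu_\ast \mid \mu)$, a pleasant analogue of the gap $\Ent(\mu \mid \mu_\ast)$ appearing later in~\eqref{eq:cond_main}. The one point to keep an eye on is the bookkeeping of degenerate terms: the ambiguity $\infty - \infty$ in the supremum can in principle occur for \emph{any} proposal, including $\mu = \mu_\ast$ (take $\eta \in \calC$ with $\Ent(\eta \mid \pi) = \Ent(\eta \mid \mu_\ast) = +\infty$), so your first dismissal (``only for $\mu$ with $F(\mu) = +\infty$'') is not quite right; your fallback convention of restricting the supremum to $\eta$ with $\Ent(\eta \mid \pi) < +\infty$ is the correct fix and is implicitly what the paper does as well.
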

\begin{proof} The condition~\eqref{eq:Pyth} is equivalent to the condition
$$\sup_{\eta \in \calC} \Ent(\eta \mid \mu) - \Ent(\eta \mid \pi) \leq - \Ent(\mu_\ast \mid \pi), $$
so that the statement of the Pythagorean theorem is equivalent to the two conditions: i) $\sup_{\eta \in \calC} \Ent(\eta \mid \mu) - \Ent(\eta \mid \pi) > - \Ent(\mu_\ast \mid \pi) $ if $\mu \neq \mu_\ast$, and ii) $
 \sup_{\eta \in \calC} \Ent(\eta \mid \mu_\ast) - \Ent(\eta \mid \pi) \leq - \Ent(\mu_\ast \mid \pi) $ in which the upperbound is attained for $\eta = \mu_\ast$.
\end{proof}

 
Admitting that we quantify the logarithm of the cost of importance sampling of a target $\eta\in \mathcal{C}$ with a proposal $\mu$ using relative entropy, Corollary~\ref{cor:optim} is already of interest to our sampling interpretation. Indeed let us define the \emph{worst relative logarithmic cost} by
$$\mrm{WRLC}_{\calC}(\mu \mid \pi) = \sup_{\eta \in \calC} \Ent(\eta \mid \mu) - \Ent(\eta \mid \pi)$$, that is the logarithm of worst-case \emph{improvement ratio} between performing importance sampling with proposal $\mu$ as compared to with reference proposal $\pi$. The Pythagorean theorem is equivalent to say that the entropy minimizing distribution $\mu_\ast$ is the unique optimal proposal in terms of $\mrm{WRLC}_{\calC}(\mu \mid \pi)$. The latter is nonetheless not fully satisfactory from a practical perspective because for $\mu=\mu_\ast$ the worst case target is attained by the proposal itself $\eta=\mu_\ast$. Unfortunately, it might happen in principle that $\eta =\mu_\ast$ is the unique worst-case target in the sense of the above $\mrm{WRLC}$ criteria. The Pythagorean theorem thus says nothing \textit{a priori} about the improvement of cost for those targets $\eta$ with higher relative entropy which are of practical significance.\medskip 

In a way, it is the purpose of the present paper to modify the min-max formulation of Corollary~\ref{cor:optim} in order to consider \emph{absolute rather than relative} {worst-case} costs. This justifies the definition of the worst-case log-cost~\eqref{eq:log-cost_intro}. \medskip

\subsection{Gibbs case}
We end this section with the most standard example of entropy minimizing distributions.

\begin{Exa}[Gibbs exponential family] Let $T: E \to \R^d$ be given with finite exponential moments $ \pi(\e^{\langle \beta, T\rangle }) < +\infty$ for all $\beta \in \R^d$. Assume that $$ \calC = \set{\eta \mid \eta(T) \in C} $$ where $C$ is closed convex with $C \cap \mrm{supp}(T \sharp \pi) \neq \emptyset$. Then,
$$
\mu_{\ast} = \mu_{\beta^\ast}= \frac{1}{Z_{\beta_\ast}}{\rm e}^{\langle \beta_\ast, T\rangle } \d \pi,
$$
for a unique $\beta_\ast \in \R^d$. Morever, $\beta_\ast$ is the unique $\beta \in \R^d$ satisfying the first order optimality condition
$$
\forall t \in C, \qquad \langle \beta, t \rangle \geq \langle \beta, \mu_\beta(T) \rangle .
$$
In the case where $C =\set{t_0}$, $\calC$ is called a \emph{linear family}, the equality case is satisfied in the Pythagorean theorem, and $\beta_\ast$ is the unique $\beta$ satisfying
$$
\mu_\beta(T) = t_0.
$$
\end{Exa}

\section{Main results}\label{sec:main}

We now assume in this section that one wants to compare the cost of importance sampling between: i) a proposal $\mu$, and ii) the reference proposal $\pi$. \medskip

We will use the exponential of relative entropy (rather than variance) to quantify the sample size $N^\ast$ required by importance sampling of  $\eta$ by $\mu$, that is
$
\Ent(\eta \mid \mu) \simeq \ln N^\ast .
$
This will be thoroughly discussed and justified in Section~\ref{sec:sample_size}. From now on, $\Ent(\eta \mid \mu)$ will be called the \emph{log-cost} of performing importance sampling of target $\eta$ with proposal $\mu$.  \medskip

Denoting by $\calA \subset \Proba(E)$ a given set of admissible target distributions, it is then natural to define the \emph{worst-case log-cost} -- of importance sampling with $\mu$ as compared to importance sampling with $\pi$ -- by the quantity~\eqref{eq:log-cost_intro}, that we recall here:
\begin{equation}\label{eq:log-cost}
 \mrm{WLC}_h(\mu \mid \pi) \eqdef \sup_{\eta \in \calA, \, \Ent(\eta \mid \pi) \leq h} \Ent( \eta \mid \mu) - h .
\end{equation}
The quantity~\eqref{eq:log-cost} is the logarithm of the worst-case cost of importance sampling with proposal $\mu$ for targets with a given maximal reference log-cost of importance sampling with $\pi$.\medskip

Note that since $\mu_\ast$ is entropy minimizing, the definition of $\mrm{WLC}_h$ is well-defined only for $h \geq \Ent(\mu_\ast \mid \pi)$. For $h_\ast = \Ent(\mu_\ast \mid \pi)$, the set defining the supremum is given by the singleton $\mu_\ast$, and one trivially obtains 
 $\mrm{WLC}_h(\mu \mid \pi) = \Ent( \mu_\ast \mid \mu ) - \Ent( \mu_\ast \mid \pi ) $ which has minimum $0$ for $\mu = \mu_\ast$. \medskip

In~\eqref{eq:log-cost}, the 'worst-case scenario' is defined using the subset $\set{\Ent(\,\, . \mid \pi) \leq h} \cap \calA$ defined by the admissible target distributions with a given maximal reference log-cost of importance sampling (with the reference proposal $\pi$). As detailed in the previous section, the definition of log-cost~\eqref{eq:log-cost} has to be compared with the variant of the Pythagorean theorem in Corollary~\eqref{cor:optim}. The latter involves the worst-case \emph{relative} log-cost, where the relative log-cost is defined by the difference $\Ent( \eta \mid \mu) - \Ent( \eta \mid \pi)$, whereas in~\ref{eq:log-cost} the comparison is made with a reference worst-case value $h$. This ensures that the minimization of the worst-case log-cost $\mrm{WLC}_h$ really focuses on the worse target distributions.

\subsection{The general min-max theorem}
We can then state the first of the two main theorems of this paper. The main issue is to give a general condition under which the optimal proposal distribution minimizing the worst-case log-cost $\mrm{WLC}_h$ is indeed the entropy minimizing over a convex set $\calC$ of distributions containing $\calA$:
$$
\calA \subset \calC.
$$

In short, this condition asks that in any half-space (as defined by bounded measurable functions) containing strictly $\mu_\ast$, one can find a target $\eta \in \calA$ with log-cost $\Ent(\eta \mid \pi)$ arbitrarily close to the reference log-cost $h$; or, in other words, that $\mu_\ast$ and those target distributions $\eta$ with $\Ent(\eta \mid \pi)$ close to $h$ cannot be strictly separated by an hyperplane. \medskip

We will in fact show a much more precise results. First we will show that the optimal worst-case log-cost obtained for $\mu = \mu_\ast$ is given in fact by the opposite of the minimal entropy on $\calC$: $-\Ent(\mu_\ast \mid \pi)$. This is similar to what happens in the Pythagorean theorem of Theorem~\ref{th:entmin}. We will also prove the inequality~\eqref{eq:cond_main} below, that states that the difference between i) a worst-case log-cost for any proposal $\mu$, and ii) the optimal worst-case log-cost for proposal $\mu_\ast$, is in fact greater than the log-cost of $\mu$ itself, $\Ent(\mu \mid \mu_\ast)$. This will immediately yield the desired characterization of $\mu_\ast$ as optimal worst-case proposal, with a quantification of optimality given by $\Ent(\mu \mid \mu_\ast)$.

\begin{The}\label{th:main1}Let $(E,\pi)$ be a standard probability state space, with $\calC \subset \Proba(E)$ a given convex subset of probability distributions containing the unique entropy minimizing distribution $\mu_\ast = \argmin_{\calC} \Ent( \, . \mid \pi)$.\medskip
%

Let $h \geq \Ent(\mu_\ast \mid \pi)$ be given, and assume that for all bounded measurable real-valued function $f:E \to \R$, and all $\eps > 0$, there exists an admissible distribution $\eta_{f,h,\eps} \in \calA \subset \calC$ such that
\begin{equation}\label{eq:ass_main}
\abs{ \Ent( \eta_{f,h,\eps} \mid \pi) - h } \leq \eps,  \quad \& \quad \eta_{f,h,\eps}(f) \geq \mu_\ast(f)- \eps.
\end{equation}

Then one first has that:
$$
\mrm{WLC}_{h}(\mu_\ast \mid \pi) =  - \Ent(\mu_\ast \mid \pi).
$$
has:
\begin{equation}\label{eq:cond_main}
\forall \mu \in \Proba(E), \qquad \mrm{WLC}_{h}(\mu \mid \pi) \geq \mrm{WLC}_{h}(\mu_\ast \mid \pi) + \Ent(\mu \mid \mu_\ast).
\end{equation}
where in the above the log-cost is defined by~\eqref{eq:log-cost}.
\end{The}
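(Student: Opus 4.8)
The plan is to establish the two assertions separately: the identity $\mrm{WLC}_{h}(\mu_\ast\mid\pi)=-\Ent(\mu_\ast\mid\pi)$ is close to a restatement of the Pythagorean theorem, whereas inequality~\eqref{eq:cond_main} is where hypothesis~\eqref{eq:ass_main} does the real work. At the outset I would reduce to the case $h>\Ent(\mu_\ast\mid\pi)$, since for $h=\Ent(\mu_\ast\mid\pi)$ the feasible set in~\eqref{eq:log-cost} is the singleton $\{\mu_\ast\}$ and both statements are immediate. For the easy half, observe that for any feasible $\eta$ (that is, $\eta\in\calA$ with $\Ent(\eta\mid\pi)\le h$), the Pythagorean inequality~\eqref{eq:Pyth} of Theorem~\ref{th:entmin} with $\mu=\mu_\ast$ gives $\Ent(\eta\mid\mu_\ast)\le\Ent(\eta\mid\pi)-\Ent(\mu_\ast\mid\pi)\le h-\Ent(\mu_\ast\mid\pi)$; taking the supremum over feasible $\eta$ yields $\mrm{WLC}_{h}(\mu_\ast\mid\pi)\le-\Ent(\mu_\ast\mid\pi)$.

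For inequality~\eqref{eq:cond_main}, fix $\mu\in\Proba(E)$ and exploit the identity $\Ent(\eta\mid\mu)=\Ent(\eta\mid\pi)-\eta\!\left(\ln\frac{\d\mu}{\d\pi}\right)$, valid whenever the densities exist, together with the Gibbs (Donsker--Varadhan) variational representation $\Ent(\eta\mid\mu)=\sup_{g}\left\{\eta(g)-\ln\mu(\e^{g})\right\}$ over bounded $g$. The goal is to produce, through~\eqref{eq:ass_main}, a feasible $\eta$ close to $\mu_\ast$ in the directions that govern the right-hand side. Concretely, I would apply~\eqref{eq:ass_main} with the bounded function $f=f_M$ obtained by truncating at level $M$ the (generally unbounded) log-density ratio that controls the bound --- essentially $-\ln\frac{\d\mu}{\d\pi}$, up to the contribution of $\ln\frac{\d\mu_\ast}{\d\pi}$ singled out by the corollary following Theorem~\ref{th:entmin} --- and with the reference level lowered from $h$ to $h-\eps$, so that the returned $\eta_{f_M,h-\eps,\eps}\in\calA$ satisfies both $\Ent(\eta_{f_M,h-\eps,\eps}\mid\pi)\le h$ (hence feasibility) and $\eta_{f_M,h-\eps,\eps}(f_M)\ge\mu_\ast(f_M)-\eps$. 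Feeding $g=f_M$ into the variational formula bounds $\Ent(\eta_{f_M,h-\eps,\eps}\mid\mu)$ below by a quantity involving $\mu_\ast(f_M)$ and $\ln\mu(\e^{f_M})$; letting $M\to\infty$ then $\eps\to0$, and using dominated convergence ($\mu_\ast(f_M)\to\mu_\ast(\ln\frac{\d\mu}{\d\pi})$, which is $\mu_\ast$-integrable since $\Ent(\mu_\ast\mid\pi)<\infty$) together with the lower bound $\eta(\ln\frac{\d\mu_\ast}{\d\pi})\ge\Ent(\mu_\ast\mid\pi)$ from that corollary, should deliver exactly the constant $-\Ent(\mu_\ast\mid\pi)$ plus the entropic penalty on the right of~\eqref{eq:cond_main}.

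Finally, specialising~\eqref{eq:cond_main} to $\mu=\mu_\ast$ gives $\mrm{WLC}_{h}(\mu_\ast\mid\pi)\ge-\Ent(\mu_\ast\mid\pi)$, which with the easy half proves the first assertion; along the way one records that the supremum defining $\mrm{WLC}_{h}(\mu_\ast\mid\pi)$ is attained in the limit along the family $\eta_{f_M,h-\eps,\eps}$ built above. Substituting $\mrm{WLC}_{h}(\mu_\ast\mid\pi)=-\Ent(\mu_\ast\mid\pi)$ back into~\eqref{eq:cond_main} then yields the worst-case characterisation, with the quantitative gap controlled by the relative entropy between $\mu$ and $\mu_\ast$, and in particular identifies $\mu_\ast$ as the unique minimiser by strict positivity and strict convexity of relative entropy.

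The hard part is the limiting argument of the second step. The test functions one naturally wants to feed into~\eqref{eq:ass_main} are unbounded log-densities, so the whole estimate rests on a truncation scheme, and the positive tail terms $\eta\!\left((\ln\frac{\d\mu}{\d\pi}-M)^{+}\right)$ created by the truncation are not controlled by the stated hypotheses on their own; one must either show they vanish uniformly over the relevant $\eta$ as $M\to\infty$, exploiting the entropy bound $\Ent(\cdot\mid\pi)\le h+\eps$ (which bounds set probabilities) and a Young-type inequality for $x\ln x$ (to bound truncated integrals), or restructure the estimate so that it only ever tests against genuinely bounded functions while still recovering the sharp constant. This is precisely the delicate point: a crude use of the variational formula only gives the weaker bound $\mrm{WLC}_{h}(\mu\mid\pi)\ge\Ent(\mu_\ast\mid\mu)-h$, so it is the correct exploitation of the constraint $\Ent(\eta\mid\pi)\le h$ (not merely $\eta\ll\pi$), together with careful bookkeeping of which relative entropy appears and in which order, that is needed to land on the stated right-hand side.
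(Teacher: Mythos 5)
Your easy half is fine and coincides with the paper: for feasible $\eta$ the Pythagorean inequality~\eqref{eq:Pyth} gives $\Ent(\eta\mid\mu_\ast)\le h-\Ent(\mu_\ast\mid\pi)$, hence $\mrm{WLC}_h(\mu_\ast\mid\pi)\le-\Ent(\mu_\ast\mid\pi)$, and your device of invoking~\eqref{eq:ass_main} at the lowered level $h-\eps$ to guarantee feasibility is a reasonable repair of a point the paper passes over silently. The genuine gap is in the central step, and you flag it yourself: inequality~\eqref{eq:cond_main} is never actually derived. The mechanism you commit to --- Donsker--Varadhan with a bounded truncation $g=f_M$ of the log-density ratio --- is structurally unable to produce the constant $h-\Ent(\mu_\ast\mid\pi)+\Ent(\mu_\ast\mid\mu)$: with $g\approx\ln\frac{\pi}{\mu}$ truncated one gets $\Ent(\eta\mid\mu)\ge\eta(f_M)-\ln\mu(\e^{f_M})\ge\mu_\ast(f_M)-\eps$, whose limit is $\mu_\ast(\ln\frac{\pi}{\mu})=\Ent(\mu_\ast\mid\mu)-\Ent(\mu_\ast\mid\pi)$; the level $h$ has disappeared, which is exactly the ``weaker bound'' you concede in your last paragraph, and it cannot be reinstated through the $\ln\mu(\e^{g})$ term. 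Moreover the corollary you invoke, $\eta(\ln\frac{\mu_\ast}{\pi})\ge\Ent(\mu_\ast\mid\pi)$, goes the wrong way for your purpose: in any decomposition of $\Ent(\eta\mid\mu)$ it provides an \emph{upper} bound on the relevant linear term (this is how the paper gets $\mrm{WLC}_h(\mu_\ast\mid\pi)\le-\Ent(\mu_\ast\mid\pi)$), not the lower bound you need. The paper's route is different and simpler: for the $\eta=\eta_{f,h,\eps}$ supplied by~\eqref{eq:ass_main} (the case $\Ent(\eta\mid\mu)=+\infty$ being trivial) it uses the exact decomposition~\eqref{eq:deco}, $\Ent(\eta\mid\mu)=\Ent(\eta\mid\pi)+\eta(\ln\frac{\pi}{\mu})$, so the constraint level enters \emph{exactly} through $\Ent(\eta\mid\pi)\approx h$, and only the linear term $\eta(\ln\frac{\pi}{\mu})$ must be pushed up, via the half-space part of~\eqref{eq:ass_main} applied with $f$ (a truncation of) $\ln\frac{\pi}{\mu}$, towards its value at the minimizer, $\mu_\ast(\ln\frac{\pi}{\mu})=-\Ent(\mu_\ast\mid\pi)+\Ent(\mu_\ast\mid\mu)$. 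Summing gives~\eqref{eq:cond_main} (note the paper's computation in fact yields the reversed entropy $\Ent(\mu_\ast\mid\mu)$ on the right-hand side), and setting $\mu=\mu_\ast$ then gives the first identity, as in your final step.

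To be fair, your worry about truncation tails is legitimate: the paper applies~\eqref{eq:ass_main} directly to the generally unbounded $f=\ln\frac{\pi}{\mu}$ without comment, and a uniform control of $\eta\p{(\ln\frac{\mu}{\pi}-K)^{+}}$ over $\set{\Ent(\,\cdot\mid\pi)\le h}$ does not follow from the entropy constraint alone, so the lower-truncation error does require an argument (or a strengthening of the hypothesis, which the construction in Theorem~\ref{th:main2} in fact provides, since it works for arbitrary measurable $f$). But identifying this difficulty is not the same as resolving it: as written, your proposal establishes only the upper bound for $\mu_\ast$ and the $h$-free weaker lower bound, and leaves the decisive inequality~\eqref{eq:cond_main} conditional on a limiting argument you explicitly do not supply and whose bookkeeping, as sketched, would not land on the stated constant.
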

The above result immediately implies our main finding that 

$$
 \mu_\ast = \mathop{\mrm{argmin}}_{\mu \in \Proba(E)} \mrm{WLC}_{h}(\mu \mid \pi) ,
$$
and when the condition~\eqref{eq:cond_main} is satisfied for any $h$, we also obtain that:
$$
 \mu_\ast = \mathop{\mrm{argmin}}_{\mu \in \Proba(E)} \limsup_{h \to + \infty} \mrm{WLC}_{h}(\mu \mid \pi) ,
$$
the quantity $\limsup_{h \to + \infty} \mrm{WLC}_{h}$ being a natural definition of the worst-case log-cost when no constraint is applied to admissible targets.

\begin{Rem}
 The main assumption~\eqref{eq:ass_main} in Theorem~\ref{th:main1} above have a nice geometric interpretation. $\mu_\ast$ is the unique distribution obtained as the intersection of two convex sets $\calC$ and $\set{\Ent( \mid \pi) \leq h_\ast}$, where $h_\ast=\Ent(\mu_\ast \mid \pi)$ is such that the latter two are 'tangent' with each other. The condition~\eqref{eq:ass_main} exactly asks that in \emph{any} open\footnote{for the locally convex weak topology on finite measures making evaluation of measurable bounded function continuous. This topology is sometimes called the weak $\tau$-topology as in~\cite{dembo2009large}} half-space of distributions that contains $\mu_\ast$, one can find an element of $\calA$ with prescribed entropy.
\end{Rem}

\begin{proof}[Proof of Theorem~\ref{th:main1}] First recall that
$
\Ent(\mu_\ast \mid \pi) \leq \inf_{\eta \in \calA}\Ent(\eta \mid \pi)
$

\underline{Step~$1$.} We claim that
$$
\sup_{\eta \in \calA \bigcap \set{\Ent( \, . \,  \mid \pi) \leq h }}  \Ent(\eta \mid \mu_\ast) = h - \Ent(\mu_\ast \mid \pi).
$$
Indeed, let $\eta \in \calA$ be a probability such that $\Ent(\eta \mid \pi) < +\infty $. By the Pythagorean inequality~\eqref{eq:Pyth} one has $\Ent(\eta \mid \mu_\ast) < +\infty $ and $\eta\p{\abs{\ln \pi / \mu_\ast}}< +\infty$. One can then consider the following decomposition:
\begin{align}
 \Ent(\eta \mid \mu_\ast) &= \Ent(\eta \mid \pi) + \eta(\ln \frac{\pi}{\mu_\ast}), \label{eq:deco1} 
 \end{align}
and~\eqref{eq:Pyth} becomes $\eta(\ln \frac{\pi}{\mu_\ast}) \leq - \inf_{\eta \in \calA}\Ent(\eta \mid \pi) \leq - \Ent(\mu_\ast \mid \pi)$. By~\eqref{eq:ass_main}, there exists a sequence in $\calA$ such that $\lim_n \Ent(\eta_n \mid \pi) = h$ and $\lim_n \eta_n(\ln \frac{\pi}{\mu_\ast}) \geq -  \Ent(\mu_\ast \mid \pi)$. Hence the claim.

\underline{Step~$2$.} We claim that if $\mu \neq \mu_\ast$, then
$$
\sup_{\eta \in \calA \bigcap \set{\Ent( \, . \,  \mid \pi) \leq h }}  \Ent(\eta \mid \mu) > h - \Ent(\mu_\ast \mid \pi).
$$

Indeed, let $\eta$ with $\Ent(\eta \mid \pi) < +\infty$, and $\mu \neq \mu_\ast$ be given, and assume without lost of generality that $\Ent(\eta \mid \mu) < +\infty$, which implies $\eta\p{\abs{\ln \frac{\mu}{\pi}}}< +\infty$. One has the decomposition
\begin{align}
 \Ent(\eta \mid \mu) &= \Ent(\eta \mid \pi) + \eta(\ln \frac{\pi}{\mu}) \label{eq:deco} 
 \end{align}
By~\eqref{eq:ass_main}, there exists a sequence such that $\Ent(\eta_n \mid \pi) = h$ and $\lim_n \eta_n(\ln \pi / \mu) \geq \mu_\ast(\ln \pi / \mu )=  -\Ent(\mu_\ast \mid \pi) + \Ent(\mu_\ast \mid \mu)$. Hence the claim.
\end{proof}

\subsection{A sufficient condition in the atomless case}

In this section, we will show that the main assumption in Theorem~\ref{th:main1} is satisfied in a quite generic context related to applications. This is Theorem~\ref{th:main2} below, which is the second main theorem of this paper.\medskip 

Let $T:E \to F$ be a measurable map onto a secondary (standard Borel) measurable space $F$. Let $\calC_T$ denotes any convex subset of $\Proba(F)$. Let us assume that the convex set containing admissible target distributions is defined as the pull-back by $T$ of $\calC_T$:
\begin{equation}\label{eq:CT} \calC = \set{ T \sharp \eta \in \calC_T \subset \Proba(F)}.
\end{equation}
This is a completely generic context; any convex set $\calC$ can be written in this way, for instance trivially is a measurable isomorphism. This corresponds to the practical situation in which the prior information on the possible targets is given by a condition on the push-forward by $T$.

\begin{The}\label{th:main2} Let $(E,\pi)$ be a standard probability space. Let $\calC$ be given by~\eqref{eq:CT} and contains the unique associated entropy minimizing distribution. Assume that:
\begin{enumerate}
\item The conditional distributions $\pi(\, \, .\mid T=t)$ are atomless $T\sharp(\d t)$-almost everywhere.
\item The set of admissible target distributions $\calA$ contains at least all distributions having an indicator density with respect to $\mu_\ast$.
\end{enumerate}

Then for any $h \geq \Ent(\mu_\ast \mid \pi)=h_\ast$, any $f:E \to \R$ bounded measurable, and any $\eps >0$, $\calA$ contains a distribution $\eta_{f,h,\eps}$ such that~\eqref{eq:ass_main} is verified. In particular, the statement of Theorem~\ref{th:main1} holds true for any $h \geq h_\ast$.
\end{The}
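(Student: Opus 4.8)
The goal is to produce, for any bounded measurable $f$ and any $\eps>0$, an admissible $\eta_{f,h,\eps}\in\calA$ with $|\Ent(\eta_{f,h,\eps}\mid\pi)-h|\le\eps$ and $\eta_{f,h,\eps}(f)\ge\mu_\ast(f)-\eps$. Since $\calA$ is assumed to contain all distributions with an indicator density with respect to $\mu_\ast$, the natural ansatz is to look for $\eta$ of the form $\d\eta = \frac{1}{\mu_\ast(A)}\unit_A\,\d\mu_\ast$ for a suitable measurable set $A\subseteq E$. For such an $\eta$ one computes directly
\[
\Ent(\eta\mid\pi)=\Ent(\eta\mid\mu_\ast)+\eta(\ln\tfrac{\mu_\ast}{\pi})=-\ln\mu_\ast(A)+\frac{1}{\mu_\ast(A)}\int_A\ln\tfrac{\mu_\ast}{\pi}\,\d\mu_\ast,
\]
so the task reduces to choosing $A$ so that (i) $\mu_\ast(A)$ is small enough to push $-\ln\mu_\ast(A)$ up near $h$, (ii) the conditional average of $\ln(\mu_\ast/\pi)$ over $A$ stays controlled (ideally close to its $\mu_\ast$-average $\Ent(\mu_\ast\mid\pi)$, using the Corollary after Theorem~\ref{th:entmin} which gives a one-sided bound), and (iii) the conditional average of $f$ over $A$ is at least $\mu_\ast(f)-\eps$.

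**Where the atomless hypothesis enters.** The key technical ingredient is that the conditional laws $\pi(\,\cdot\mid T=t)$ are atomless; since $\mu_\ast\ll\pi$ with a density that is a function of $T$ alone up to the push-forward structure — more carefully, $\mu_\ast$ shares the same conditional-measure disintegration over $T$ in the sense that its conditionals are mutually absolutely continuous with those of $\pi$ — the conditionals $\mu_\ast(\,\cdot\mid T=t)$ are also atomless for $T\sharp\pi$-a.e.\ $t$. An atomless measure admits measurable subsets of every intermediate mass (a Lyapunov/Sierpiński-type fact), and more: within each fiber $\{T=t\}$ one can carve out, for any prescribed small mass level, a subset on which $f$ has a conditional average arbitrarily close to the fiberwise supremum of $f$. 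The plan is to build $A$ fiber by fiber: on each fiber pick a subset of controlled relative mass that is concentrated where $f$ is large within that fiber, then let the fiber-mass levels vary with $t$ so that the integrated relations hold. Because one is \emph{not} changing $T\sharp\eta$ at all (if the relative mass taken in each fiber is constant in $t$) — or changing it only slightly — the membership $T\sharp\eta\in\calC_T$ is inherited; this is exactly why the push-forward description of $\calC$ is exploited, and why indicator densities w.r.t.\ $\mu_\ast$ land back in $\calC$.

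**Assembling the three constraints.** First I would handle the entropy constraint alone: by atomlessness choose a decreasing family of sets $A_s$ with $\mu_\ast(A_s)=s$; the map $s\mapsto\Ent(\eta_{A_s}\mid\pi)$ can be made continuous and, as $s\to 0$ along an appropriate selection, driven to $+\infty$ (using $-\ln s\to\infty$ while keeping the conditional average of $\ln(\mu_\ast/\pi)$ from blowing down, via the Corollary's inequality $\eta(\ln\tfrac{\mu_\ast}{\pi})\ge\mu_\ast(\ln\tfrac{\mu_\ast}{\pi})$ applied to indicator-density $\eta$'s in $\calC$), while at $s=1$ it equals $\Ent(\mu_\ast\mid\pi)=h_\ast\le h$; intermediate value theorem then hits $h$ up to $\eps$. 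Second, to simultaneously force $\eta(f)\ge\mu_\ast(f)-\eps$, I would, within the freedom of choosing which subset of each fiber to keep, bias the selection toward $\{f\ge \text{(fiberwise essential sup of }f)-\delta\}$-type sets; since removing a small-mass piece of $\mu_\ast$ and reweighting can only move $\eta(f)$ by $O$ of that mass times $\|f\|_\infty$ if done neutrally, and can be made to \emph{increase} $\eta(f)$ if done toward large $f$, one arranges $\eta(f)\ge\mu_\ast(f)-\eps$ comfortably. The main obstacle — and the step deserving the most care — is the simultaneous satisfaction of the entropy target and the $f$-lower-bound: these two demands pull on the same degree of freedom (the choice of $A$), so I expect the cleanest route is a two-parameter family $A_{s,\delta}$ (mass level $s$, and a "greediness toward large $f$" parameter $\delta$), show the entropy depends continuously on $s$ roughly monotonically for each fixed $\delta$, fix $\delta$ small enough that the $f$-constraint is met, then solve for $s$; a measurable-selection argument (using that $E$, $F$ are standard Borel) is needed to make the fiberwise constructions glue into a genuine measurable set $A$. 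Finally I would verify that the resulting $\eta_{f,h,\eps}=\eta_{A}$ indeed lies in $\calA$ by the indicator-density hypothesis, and conclude by invoking Theorem~\ref{th:main1}.
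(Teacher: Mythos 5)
Your ansatz is the paper's: distributions with indicator density with respect to $\mu_\ast$, built fiberwise over $T$ using the atomless conditionals, with constant relative mass per fiber so that $T\sharp\eta$ is preserved, and a measurable-selection argument (the paper's Lemma~\ref{lem:meas1}) to glue the fibers. However, the step you yourself flag as ``the main obstacle'' --- meeting the entropy level $h$ and the bound $\eta(f)\ge\mu_\ast(f)-\eps$ simultaneously --- is left open, and the two-parameter IVT scheme you sketch does not close it. It also contains a circularity: your control of $\eta_{A_s}(\ln\tfrac{\mu_\ast}{\pi})$ from below invokes the Corollary after Theorem~\ref{th:entmin}, which applies only to $\eta\in\calC$, i.e.\ only once you have already imposed the constant-fiber-mass structure (so that $T\sharp\eta_{A_s}=T\sharp\mu_\ast\in\calC_T$); and the continuity in $s$ of $s\mapsto\Ent(\eta_{A_s}\mid\pi)$ along a family that must at the same time respect the $f$-bias is asserted rather than proved.

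The missing idea is structural and removes the tension entirely. Because $\calC$ constrains only the push-forward $T\sharp\eta$, the chain rule for relative entropy,
\[
\Ent(\mu\mid\pi)=\int_F \Ent\bigl(\mu(\,\cdot\mid T=t)\mid\pi(\,\cdot\mid T=t)\bigr)\,T\sharp\mu(\d t)+\Ent(T\sharp\mu\mid T\sharp\pi),
\]
forces the minimizer to satisfy $\mu_\ast(\,\cdot\mid T=t)=\pi(\,\cdot\mid T=t)$ for $T\sharp\pi$-a.e.\ $t$; equivalently $\d\mu_\ast/\d\pi$ is a function of $T$ and $\Ent(\mu_\ast\mid\pi)=\Ent(T\sharp\mu_\ast\mid T\sharp\pi)=h_\ast$. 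Granting this, choose in each fiber a Borel set $A_{f,h}$ with $\pi(A_{f,h}\mid T=t)=\e^{h_\ast-h}$ \emph{exactly and independently of $t$}, taken in the ``largest-$f$'' part of the fiber so that $\pi(f\mid A_{f,h}\,\&\,T=t)\ge\pi(f\mid T=t)$ (atomlessness plus the independent complement of Lemma~\ref{lem:meas1} make this measurable). Then $T\sharp\eta_{f,h}=T\sharp\mu_\ast$, each conditional entropy equals $h-h_\ast$, and the chain rule gives $\Ent(\eta_{f,h}\mid\pi)=h_\ast+(h-h_\ast)=h$ exactly --- the choice of \emph{which} part of the fiber you keep never enters the entropy, because $\ln\tfrac{\mu_\ast}{\pi}$ is $\sigma(T)$-measurable and the push-forward is unchanged; meanwhile the fiberwise $f$-inequality integrates against the unchanged push-forward to give $\eta_{f,h}(f)\ge\mu_\ast(f)$. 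Thus \eqref{eq:ass_main} holds with $\eps=0$, with no intermediate-value argument and no trade-off between the two constraints; your one-sided use of the Pythagorean corollary and the continuity-in-$s$ machinery are not needed, and without the identification of the conditionals of $\mu_\ast$ with those of $\pi$ (you only posit mutual absolute continuity) your computation of $\Ent(\eta\mid\pi)$ cannot be made exact.
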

%

\begin{proof} We first claim that: i) the conditional distribution of entropy minimizer $\mu_\ast$ is the same as the conditional distribution of $\pi$:
\[
\mu_\ast( \d x \mid T(x) = t) = \pi( \d x \mid T(x) = t), \qquad T \sharp \pi( \d t) - \text{a.e.},
\]

and ii) the entropy minimizer on $\calC_T$ relative to $T \sharp \pi$ is $T \sharp \mu_\ast$ is. Indeed, chain rule of conditional entropy reads:
$$
\Ent( \mu \mid \pi) = \int_F \Ent( \mu( \,\, . \mid T=t) \mid \pi( \,\, . \mid T=t))  T \sharp \mu( \d t) + \Ent( T \sharp \mu \mid T \sharp \pi);
$$
for $\mu$ ranging in $\calC$, the first term of the right hand side uniquely ($T \sharp \pi$ almost everywhere) vanish for $\mu( \,\, . \mid T=t) = \pi( \,\, . \mid T=t)$, while second term is minimized only if $T \sharp \mu$ minimizes entropy on $\calC_T$ relative to $T\sharp \pi$.
\medskip

Next, let $f$ be an arbitrary bounded measurable and $h \geq  h_\ast$ be given. The atomless assumption ensures that there exists a set $A_{f,h} \subset E$ satisfying for $T \sharp \mu_\ast$ almost all $t$
$$
\pi(A_{f,h} \mid T=t) =\e^{h_\ast - h} \quad \text{and} \quad \pi(f \mid A_{f,h} \,\&\,  T=t) \geq \pi(f \mid T=t).
$$
and Lemma~\ref{lem:meas1} (based on the existence of an independent complement to $T$) ensures that $A_{f,h}$ can be chosen among Borel subsets.\medskip

One can then define a target distribution candidate to satisfy~\eqref{eq:ass_main}.
$$
\eta_{f,h}(\d x) \eqdef \mu_\ast(\d x \mid A_{f,h} ) 
= \e^{-h_\ast + h} \one_{A_{f,h}}(x) \mu_\ast(\d x).
$$

We can now conclude by showing that $\eta_{f,h}$ indeed satisfies~\eqref{eq:ass_main} for $\eps = 0$:\medskip

\underline{Step~$1$.}Since $\mu_\ast( \, \,  \mid T=t) = \pi( \, \, \mid T=t )$, one can remark that  by construction of $A_{f,h}$, one also have $\mu_\ast(f \mid A_{f,h} \,\&\,  T=t) \geq \mu_\ast(f \mid T=t)$, so that it holds $\eta_{f,h}(f) \geq \mu_\ast(f)$. \medskip

\underline{Step~$2$.} We remark that by construction $\pi(A_{f,h} \mid T=t)$ is independent of $t$ so that the push-forward distribution is unchanged $T \sharp \eta_{f,h} = T \sharp \mu_\ast$. Moreover, since $\eta_{f,h}( \, \, . \mid t=T)$ has an indicator density with respect to $\mu_\ast( \, \,  \mid T=t) = \pi( \, \, \mid T=t )$, a routine calculation yields that 
$$
\Ent(\eta_{f,h}( \,\, . \mid T=t ) \mid \pi( \,\, . \mid T=t )) =
- \ln \eta_{f,h}(A_{f,h} \mid T=t).
$$
Using again the chain rule of conditional entropy we obtain 
\begin{align*}
\Ent(\eta_{f,h} \mid \pi) 
& = \Ent(T \sharp \mu_\ast \mid T \sharp \pi) + \int - \ln \pi(A_{f,h} \mid T= t) T \sharp \eta(\d t) \\
& = \Ent(\mu_\ast \mid \pi) + h - h_\ast = h.
\end{align*}
%
%
%
\end{proof}

\subsection{A simple two atoms counter-example}
We next provide a very simple counter-example showing that the atomless asumption is critical in Theorem~\ref{th:main2}. For simplicity we consider $$\calA=\calC=\Proba(E),$$
so that entropy minimizing distribution is simply the reference $\pi$:
$$
\mu_\ast = \pi, \qquad h_\ast = 0.
$$
Similar counter-examples can be constructed in more complex cases. \medskip 

The problem arises when the discrete structure is not uniform: there are atoms with very different masses. In that discrete case, the (absolute) worst-case target distribution for the proposal $\pi$ is exactly a Dirac distribution on the atom with smallest probability. The worst-case optimal proposal is then the uniform discrete measure: proposing with $\pi$ is sub-optimal because of the smallest atom. However, for medium values of $h$, the discrete uniform distribution competes with the reference $\pi$; while for smaller values of $h$, targets concentrated on the smallest atom are not allowed and $\pi$ is optimal.\medskip


\begin{Pro}
Let $E=\set{1,2}$ and $\pi \in \Proba(E)$ non-degenerate with $\pi(1)\geq\pi(2) > 0$. Let us denote $\mrm{WLC}_h(\mu \mid \pi) \eqdef \sup_{\eta: \, \Ent(\eta \mid \pi) \leq h} \Ent(\eta \mid \mu)$. Remark that $\Ent(\delta_i \mid \pi) = - \ln \pi(i)$. If $h \in [- \ln \pi(1), - \ln \pi(2)]$, let us denote by $\pi_h$ the unique distribution such that
$$
\Ent(\pi_h \mid \pi) = h.
$$
If $h \leq - \ln \pi(1)$ then
$$
\pi = \argmin_{\mu \in \Proba(E)} \mrm{WLC}_h(\mu \mid \pi), 
$$
If $
h \in [ - \ln \pi(1), -\ln \pi(2)]
$
, then
$$
\pi_h = \argmin_{\mu \in \Proba(E)} \mrm{WLC}_h(\mu \mid \pi), 
$$
If $h \geq - \ln \pi(2)$ then 
$$
\mrm{Unif}(\set{1,2})= \argmin_{\mu \in \Proba(E)} \mrm{WLC}_h(\mu \mid \pi)).
$$
\end{Pro}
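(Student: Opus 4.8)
The plan is to collapse everything to one real variable. Write $\eta=p\delta_1+(1-p)\delta_2$ and $\mu=q\delta_1+(1-q)\delta_2$ with $p,q\in[0,1]$, and set $\varphi(p):=\Ent(\eta\mid\pi)=p\ln\frac{p}{\pi(1)}+(1-p)\ln\frac{1-p}{\pi(2)}$ and $\psi_q(p):=\Ent(\eta\mid\mu)=p\ln\frac pq+(1-p)\ln\frac{1-p}{1-q}$. Both are strictly convex in $p$; $\varphi$ has minimum $0$ at $p=\pi(1)$, $\psi_q$ has minimum $0$ at $p=q$ (for $q\in(0,1)$), and $\varphi(1)=-\ln\pi(1)\le-\ln\pi(2)=\varphi(0)$. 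Strict convexity of $\varphi$ shows the admissible set $\{\eta:\varphi(\eta)\le h\}$ is a closed interval $I_h\ni\pi(1)$, whose endpoints I would identify as: $I_h=[p_-(h),p_+(h)]$ with $0<p_-<\pi(1)<p_+<1$ when $h<-\ln\pi(1)$; $I_h=[p_-(h),1]$ with right endpoint $\delta_1$ when $-\ln\pi(1)\le h<-\ln\pi(2)$; and $I_h=[0,1]$, i.e. the whole simplex, when $h\ge-\ln\pi(2)$. Since $\psi_q$ is convex, $\mrm{WLC}_h(\mu\mid\pi)=\sup_{p\in I_h}\psi_q(p)$ is attained at an endpoint of $I_h$, so in each regime it reduces to the maximum of at most two explicit elementary functions of $q$, and the statement becomes a one-variable minimisation of that maximum.

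The regime $h\ge-\ln\pi(2)$ is immediate: $\mrm{WLC}_h(\mu\mid\pi)=\max\{\psi_q(1),\psi_q(0)\}=\max\{-\ln q,-\ln(1-q)\}$, uniquely minimised at $q=\tfrac12$, giving $\mrm{Unif}(\{1,2\})$. For $h\le-\ln\pi(1)$ I would use the affine splitting $\psi_q(p)=\varphi(p)+L_q(p)$ with $L_q(p):=p\ln\frac{\pi(1)}{q}+(1-p)\ln\frac{\pi(2)}{1-q}$ affine in $p$ and $L_q(\pi(1))=\Ent(\pi\mid\mu)\ge0$, equality iff $\mu=\pi$. Since $\varphi(p_\pm)=h$ and $\pi(1)$ lies between $p_-$ and $p_+$, one gets $\mrm{WLC}_h(\mu\mid\pi)=h+\max\{L_q(p_-),L_q(p_+)\}\ge h+L_q(\pi(1))\ge h=\mrm{WLC}_h(\pi\mid\pi)$, with equality throughout only if $\Ent(\pi\mid\mu)=0$; hence $\pi$ is the unique minimiser (the degenerate case $h=0$, where $I_h=\{\pi(1)\}$, is the same computation).

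The middle regime $h\in[-\ln\pi(1),-\ln\pi(2)]$ is the crux, and the step I expect to be the main obstacle. Here one endpoint of $I_h$ is the Dirac $\delta_1$, so $\mrm{WLC}_h(\mu\mid\pi)=\max\{\psi_q(p_-(h)),-\ln q\}=\max\{\Ent(\pi_h\mid\mu),\Ent(\delta_1\mid\mu)\}$, i.e. one must minimise over $\mu$ the maximum of a genuine Kullback--Leibler divergence and a $-\log$ term --- a one-dimensional min--max / information-radius problem. Each branch is convex in $q$: $q\mapsto\Ent(\pi_h\mid\mu)$ vanishes and is flat to first order at $q=p_-(h)$ (its $q$-derivative there is $-\frac{p_-(h)}{q}+\frac{1-p_-(h)}{1-q}\big|_{q=p_-(h)}=0$), while $q\mapsto-\ln q$ is strictly decreasing, so the optimiser is the $q^\star$ that balances the two branches, characterised by $\Ent(\pi_h\mid\mu)=-\ln q$ together with the matching of one-sided derivatives, and global optimality then follows from convexity of each branch. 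The delicate point --- and where I expect real work --- is to solve this balancing relation in closed form and confirm that it selects the proposal $\pi_h$ claimed in the statement. I would finish by checking continuity of the three answers at the thresholds $h=-\ln\pi(1)$ and $h=-\ln\pi(2)$ (noting that the boundary $h=-\ln\pi(1)$ is covered by the argument of the second paragraph), so that $\pi$, $\pi_h$ and $\mrm{Unif}(\{1,2\})$ patch together into a single well-defined optimal-proposal map.
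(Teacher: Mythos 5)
Your one--variable reduction, the identification of the sublevel intervals $I_h$, and the endpoint argument via convexity of $\eta\mapsto\Ent(\eta\mid\mu)$ are all correct, and your two outer regimes are complete. For $h\geq-\ln\pi(2)$ your argument is the same as the paper's. For $h\leq-\ln\pi(1)$ the paper instead verifies the hypothesis~\eqref{eq:ass_main} and invokes Theorem~\ref{th:main1}, whereas your affine splitting $\psi_q=\varphi+L_q$ with $L_q(\pi(1))=\Ent(\pi\mid\mu)$ gives a direct, self-contained proof of the same fact (including uniqueness); both routes are valid.

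The middle regime, which you leave open, is indeed a gap in your write-up, but your hesitation there is well founded: the balancing relation you derive does \emph{not} select $\pi_h$. At $\mu=\pi_h$ the two branches of $\mrm{WLC}_h(\mu\mid\pi)=\max\set{\Ent(\pi_h\mid\mu),\,-\ln\mu(1)}$ take the values $0$ and $-\ln\pi_h(1)$, which are unequal whenever $\pi_h(1)<1$; your own monotonicity analysis (the first branch increasing in $q=\mu(1)$ on $[\pi_h(1),1)$, the second strictly decreasing) shows the unique minimizer is the equalizer $q^\star\in(\pi_h(1),1)$ solving $\Ent(\pi_h\mid\mu_{q^\star})=-\ln q^\star$, equivalently $(1-\pi_h(1))\ln\frac{q^\star}{1-q^\star}=-\pi_h(1)\ln\pi_h(1)-(1-\pi_h(1))\ln(1-\pi_h(1))$, and its worst-case value $-\ln q^\star$ is strictly below $-\ln\pi_h(1)=\mrm{WLC}_h(\pi_h\mid\pi)$. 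Concretely, for $\pi=(0.9,0.1)$ and $h=1$ one gets $\pi_h\approx(0.31,0.69)$, $\mrm{WLC}_h(\pi_h\mid\pi)\approx1.17$, while $\mu=\pi$ gives $h=1$, the uniform gives $\ln 2\approx0.69$, and $q^\star\approx0.71$ gives $\approx0.34$; so the claimed minimizer is not even better than the uniform here. The paper's own proof of this case slips at exactly the point your decomposition isolates: it asserts that the extremal value contributed by $\pi_h$ equals $h$ for every $\mu$, which holds only for $\mu=\pi$ (note also that at the overlap $h=-\ln\pi(1)$ the stated first and second cases give different answers). The equalizer $q^\star$ does reduce to $\pi$ at $h=-\ln\pi(1)$ and to the uniform at $h=-\ln\pi(2)$, so the outer cases and the qualitative moral of the counter-example stand, but the middle case should be stated in terms of this equalizing proposal (or $\pi_h$ redefined accordingly); it cannot be proved as written, and your approach, pushed one line further, is what shows this.
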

\begin{proof} We start with the case $h \leq - \ln \pi(1) \leq - \ln \pi(2)$. We claim that the assumptions of Theorem~\ref{th:main1} are satisfied. Let $f:E \to \R$ be given, and let $i \in \set{1,2}$ be such that $f_i = \max f$. The map $\theta \mapsto \Ent(\theta \pi + (1-\theta) \delta_i \mid \pi)$ is continuous on $[0,1]$ taking values in $[0,-\ln \pi(i)]$ so that the value $h \leq - \ln \pi(i)$ is attained; and we set $\eta_{f,h} = \theta_h \pi + (1-\theta_h) \delta_i $ where $\theta_h$ is such that $\eta_{f,h} = \pi_h$. One also has $\eta_{h}(f) \geq \pi(f)$ by construction and the claim follows.\medskip
 
The case $h \geq - \ln \pi(2)$ is quite simple. The map $\eta \mapsto \Ent(\eta \mid \mu)$ is strictly convex with two admissible extrema: $ \eta = \delta_1$ and $\eta = \delta_2$ suprema with respective entropies $- \ln \mu(1)$ and $- \ln \mu(2)$. This implies $\mrm{WLC}_h(\mu \mid \pi) = - \ln \min(\mu(1),\mu(2)) \geq - \ln 1/2$, this last inequality being an equality if and ony if $\mu$ is the uniform distribution. This yields the result.\medskip

In the case $
h \in [ - \ln \pi(1), -\ln \pi(2)]
$, one also consider the continuous strictly convex map $\eta \mapsto \Ent(\eta \mid \mu)$ which has now two admissible extrema: i) the Dirac distribution $ \eta = \delta_1$ on the one hand, and ii) $\pi_h$. The associated admissible extrema defining $\mrm{WLC}_h$ are then respectively:  $- \ln \mu(1)$ and $h$. $h$ is the greatest so that $\mrm{WLC}_h(\mu \mid \pi) = \Ent(\pi_h \mid \mu)$. Hence the result.

\end{proof}

\section{Gibbs exponential families and applications} \label{sec:gibbs}

In this section, we consider a slightly more practical viewpoint. We assume that the main objective is to set up an importance sampling method aiming at numerically estimate averages with respect to a given target distribution $\eta_{\mrm{true}}$, defined up to a normalizing constant,
$$
\eta_{\mrm{true}}(\d x) \propto f(x) \pi(\d x).
$$
We first discuss Gibbs exponential families, which are the entropy minimizing proposals that arise when considering targets in a convex set defined using the push-forward by a vector-valued function $T$. Then, we will present an applicative scenario and discuss the relationship with the well-known \emph{cross-entropy} method.

\subsection{Gibbs exponential families}
Let us now consider a vector-valued bounded measurable function defined on the considered state-space:
$$
T: E \to \R^d.
$$
For instance, $T$ may be given by various statistics, or physical observables of special interest. We also assume that we know that averages of $T$ with respect to the target distribution belong to a given closed convex set $C$ of $\R^d$:
$$
\eta_{\mrm{true}}(T) \in C \subset \R^d.
$$
$C$ is assumed to have a non void intersection with the support of $T\sharp \pi$. This defines the admissible convex set of distributions $\calC = \set{\eta: \eta(T) \in C}$.
If one assumes that the conditional distributions $\pi( \,\, . \mid T =t)$ are atomless distribution $T \sharp \pi$ almost everywhere, one can directly apply Theorem~\ref{th:main2} and obtain that the entropy minimizing distribution $\mu_\ast$ is the worst-case optimal proposal in the sense of the log-cost~\eqref{eq:log-cost}. 

$\mu_\ast$ is well-known and is given by 
$$
\mu_\ast = \frac{1}{Z_{\beta_\ast}}{\rm e}^{\langle \beta_\ast, T \rangle } \d \pi .
$$
It is the unique distribution in $\calC = \set{\eta: \eta(T) \in C}$ and in the Gibbs (a.k.a canonical) exponential family $\mu_\beta \propto \e^{\langle \beta, T \rangle } \d \pi$, $\beta \in \R^d$, which minimizes the entropy relative to $\pi$.\medskip 


It is also interesting to remark that it is not necessary to be able to sample exactly according to $\mu_\ast$ in order to implement the method. Indeed, $\mu_\ast$ is described by $T$ up to a normalization, so that a Sequential (or other) Monte Carlo routine (\textit{e.g.} \cite{del2006sequential}) can be used to sample according to the latter and estimate the associated normalization. The latter can be coupled with an iterative convex minimization routine calculating $\beta_\ast$ 

\subsection{An applicative context}

In some practical cases, the preliminary information on the target distribution $\eta_{\mrm{true}}$ may be given by preliminary importance sampling in the form of a weighted empirical probability distribution
$$ 
\eta^N = \sum_{n=1}^N W^n \delta_{X_n},
$$
defined by a sequence of random states $X_1, \ldots, X_N \in E^N$, on which the density $f(X_1), \ldots, f(X_N)$ have been previously evaluated.\medskip

Let us again consider a vector-valued bounded measurable function defined on the considered state-space:
$
T: E \to \R^d.
$
For instance, $T$ may be given by various statistics, or physical observables of special interest. It may happen then that the user is able to define a confidence convex set $C_{\eta^N} \subset \R^d$ -- which heavily depends on the precise construction of $\eta^N$ --  which asserts that with very high probability
\begin{equation}\label{eq:confidence}
  \P[\eta_{\mrm{true}} \in \calC_{\eta^N}] \simeq 1
\end{equation}
where one has defined the confidence set of target distributions
$$
\calC_{\eta^N} \eqdef \set{\eta \in \calP(E):  \eta(T) \in C_{\eta^N} \subset \R^d}.
$$
One can then choose as a worst-case optimal proposal the Gibbs distribution $\mu_\ast \propto {\rm e}^{\langle \beta_\ast, T \rangle } \d \pi$ minimizing entropy with the constraint that averages of $T$ belong to $C_{\eta^N}$. This optimal proposal can then be computed using various combination of Monte Carlo sampling and convex optimization routines. Although each ingredient (sampling and convex optimization) are well-known, such combinations and the proposed application are not standard up to our knowledge, and a detailed study is left for future work. \medskip


An interesting point consists in remarking that the proposed optimization of the proposal can be done iteratively, leading to a kind of adaptive importance sampling algorithm. This typically also happens in the so-called \emph{cross-entropy method} that we are to briefly discuss below. 

\subsection{Relation to the cross-entropy method}\label{sec:cross-ent}

Broadly speaking, the cross-entropy method is an adaptive importance sampling method performed iteratively (with main step hereafter indexed $k$).  On this topic, the interested reader can refer to the book~\cite{rubinstein2004cross}, or to~\cite{de2005tutorial} for a shorter introduction. See also yhe related adaptive importance sampling in~\cite{douc2007convergence}. \medskip

Let $\eta^N_{(k)}$ denotes a weighted empirical distribution approximating $\eta_{\mrm{true}}$ and constructed with a previously obtained samples, as discussed in the previous section. In cross-entropy methods, importance proposals are chosen in a parametric family $(\mu_\theta)_{\theta \in \Theta}$, and in the most basic versions of the algorithm, at step $k$, an i.i.d. sequence $X^{(k)}_1, \ldots,X^{(k)}_N$ is sampled with distribution $\mu_{\theta^N_{{(k-1)}}}$ for an iteratively chosen parameter $\theta^N_{{(k-1)}}$. The approximating weighted empirical distribution discussed in the previous section is then explicitly given by
$$
\eta_{(k)}^N \propto \sum_{n=1}^N \frac{f}{\d \mu_{\theta^N_{{(k-1)}}}}\p{X^{(k)}_n} \delta_{X^{(k)}_n}.
$$

Finally, the specific parameter $\theta^N_{(k)}$ at which importance sampling is performed at step $k+1$ is chosen by solving the minimization problem
\begin{equation}\label{eq:cross-entropy}
\theta_{(k)}^N = \argmin_\theta \Ent \p{ \eta^N_{(k)} \mid \mu^N_{(k),\theta} },
\end{equation}
where  in the above we denote the empirical version of a proposal by
$$
\mu^N_{(k),\theta} \propto \sum_{n=1}^N \frac{\d \mu_{\theta}}{\d \mu_{\theta^N_{{(k-1)}}}}\p{X^{(k)}_n} \delta_{X^{(k)}_n}.
$$
\medskip

In many cases, (with the obvious exception of mixtures), $\mu_\theta$ is chosen in an exponential family that can be set in a canonical Gibbs form:
$$
\mu_\theta \equiv \mu_\beta \propto {\rm e}^{\langle \beta, T \rangle } \d \pi. 
$$
At each step of the method, the minimizer is then precisely given by $\mu_\ast=\mu_{\beta_\ast}$ where $\beta_\ast$ is the unique vector such that
$$
\mu_{\beta_\ast}(T) = \eta^N(T) .
$$

The novelty of the present paper consist in showing that the distribution $\mu_{\beta_\ast}$, according to Theorem~\ref{th:main2}, is also the worst-case optimal proposal \emph{among all possible proposals}, as soon as one considers the following convex set of admissible targets: 
$$
\calC_{\eta^N} = \set{\eta \in \Proba(E): \, \eta(T) = \eta^N(T)}.
$$
In other words, we have thus shown that the minimization of relative entropy between an empirical target $\eta^N$ and proposals in an exponential family is \emph{equivalent} to the minimization of \emph{worst-case} relative entropy for targets $\eta$ with fixed average $\eta(T)=\eta^N(T)$; the minimization being obtained for proposals spanning \emph{all} distributions. This seems to be an original interpretation of the cross-entropy method.\medskip

Our result also suggests a possible way to improve or control cross-entropy methods. In practice, confidence intervals in the form of~\eqref{eq:confidence} could be used instead of the singleton $C = \set{\eta^N(T)} \subset \R^d$. This might improve the robustness of cross-entropy algorithms, especially in high dimension. 


\section{The sample size required for importance sampling}\label{sec:sample_size}

In this section, $\eta \in \Proba(E)$ will denote a generic target probability distributions and $\mu \in \Proba(E)$ a generic proposal distribution.\medskip 

In the so-called \emph{importance sampling} method, the density $\d \eta / \d \mu$ is assumed to be computable (perhaps only up to a normalizing constant), and averages of the form $\eta(\ph)$ are estimated using the empirical distribution of $n$ i.i.d. variables $(X_i)_{i=1 \ldots n}$ distributed according to $\mu$. The estimator is given by:
\begin{equation}\label{eq:estim}
 \eta^N(\ph) \eqdef \frac1N \sum_{i=1}^N \ph(X_i) \, \frac{\d \eta}{\d \mu}(X_i) \xrightarrow[n \to +\infty]{\text{a.s.}} \eta(\ph).
\end{equation}
In the above, only the product $\ph \,  \frac{\d \eta}{\d \mu}$ has to be evaluated numerically; if the normalizing constant $Z$ is unknown, the test function $\ph$ must be defined as a product of this unknown normalization $Z$ with another computable function. From now on, we will denote
$$
Y \eqdef \frac{\d \eta}{\d \mu}(X) \geq 0, \qquad X \sim \mu.
$$
In the special case when the test function $\ph = Z$ is the normalizing constant, the relative variance of the estimation of $Z$ can be immediately computed:
\begin{equation}\label{eq:var}
\Var(\eta^N(\one)) = \frac{\Var(Y)}{N} =  \frac{\e^{\Ent_2(\eta \mid \mu)}-1}{N}
\end{equation}
where $\Ent_2(\eta \mid \mu) = \ln \eta \p{\frac{\d \eta}{\d \mu}}$ is the order~$2$ R\'enyi entropy (see below for a definition). Using~\eqref{eq:var} and various standard concentration inequalities, one can then obtain an upper bound on (some appropriate notion of) the \emph{required} sample size $N^\ast$. This upper bound is usually comparable to the variance (using Chebyshev inequality)
$
N^\ast \leq c \Var(Y),
$
or to the square of an upper bound on the support of $Y$
$
N^\ast \leq c \ell^2
$
if  $Y \leq \ell$,
or a combination of both; in the above $c$ is a numerical constant.

\subsection{The Chatterjee-Diaconis bounds}

We shall use here an alternative approach motivated by~\cite{ChaDia}. In the latter reference, some theorems (Theorem~$1.1$, $1.2$ and $1.3$) are proved showing that the sample size $N^\ast=N^\ast_{\delta,p_\alpha}(\eta \mid \mu)$ required to obtain an importance sampling estimation at a given precision $\delta$ and with a given probability $p_\alpha$ is, quite generically, and at logarithmic scales, given by the relative entropy of the target $\eta$ with respect to the proposal $\mu$:
\begin{equation}\label{eq:sample_size}
\ln N^\ast \simeq \Ent(\eta \mid \mu) = \E( Y \ln Y).
\end{equation}

The failure (or equivalently success) probability of importance sampling at a given precision  threshold $\delta > 0 $ is rigorously defined as follows. The deviation probability of importance sampling is defined using $\L^2(\nu)$-normalized test functions, and is given by:
\begin{equation}\label{eq:failgen}
p_{\mrm{dev},\delta}(N) \eqdef \sup_{\ph: \, \eta(\ph^2)=1} \P\p{ \abs{\eta^N(\ph) - \eta(\ph)} \geq \delta  },
\end{equation}
where one considers estimators of the form~\eqref{eq:estim}. A sample size denoted $N^\ast = N_{p_\alpha,\delta}^\ast(\eta \mid \mu)$ and satisfying
$$
p_{\mrm{dev},\delta}(N^\ast) = p_\alpha.
$$
%
is called a critical sample size \emph{required} for importance sampling. It is a sample size achieving $\delta$-accuracy with success probability exactly $1-p_\alpha$. \medskip 

For simplicity, we now state a slightly weaker version of Theorem~$1.1$~\cite{ChaDia} (based on lower order moment conditions)
for the special case where the failure probability is defined with the estimator of the normalizing constant rather than~\eqref{eq:failgen}:
$$
p_{\mrm{dev},\delta}(N) \eqdef \P \p{ \p{\eta^{N}(\one) - 1} \geq \delta }.
$$
This case enables to present the result as a general result on sum i.i.d. real valued random variables. The case~\eqref{eq:failgen} can be treated in the same way (since the lower bound in Theorem~$1.1$~\cite{ChaDia} is obtained for constant test functions $\ph= \one$). Such results and their proofs can be found in Section~\ref{app:ChaDia}. 

\begin{The}[Corollary of Theorem~$1.1$~\cite{ChaDia}] Let $Y_i \geq 0$, $i=1 \ldots N$ i.i.d. random variables with unit mean $\E Y = 1$. Denote by $N^\ast$ a sample size verifying
\[
\P \p{ \abs{ \frac1{N^\ast} \sum_{i=1}^{N^\ast} Y_i - 1} \geq \delta } = p_\alpha,
\]
for some $\delta,p_\alpha \in (0,1)$. Then the following estimate holds
\begin{equation}\label{eq:main_bound}
 \abs{ \ln N^\ast  - \E(Y \ln Y) } \leq \inf_{\theta \in [0,1]} \b{ \frac{1}{\theta} \ln \p{\E(Y^{1+\theta}) \E(Y^{1-\theta)) } } + \frac{\ln c(\delta,p_\alpha)}{\theta} },
\end{equation}
where $c_{\delta,p_\alpha}$ is a numerical constant depending only on $p_\alpha$ and $\delta$.
\end{The}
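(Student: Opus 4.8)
Write $I \eqdef \E(Y\ln Y)$ and $p_{\mrm{dev},\delta}(N) = \P\big(\big|\tfrac1N\sum_{i=1}^N Y_i - 1\big|\ge\delta\big)$. The plan is to bracket $\ln N^\ast$ around $I$ by proving two one-sided estimates: a \emph{success} bound, $p_{\mrm{dev},\delta}(N)<p_\alpha$ as soon as $\ln N$ exceeds $I$ by the right-hand side of~\eqref{eq:main_bound}; and a \emph{failure} bound, $p_{\mrm{dev},\delta}(N)>p_\alpha$ as soon as $\ln N$ falls short of $I$ by that amount. Since $N^\ast$ is by definition a solution of $p_{\mrm{dev},\delta}(N^\ast)=p_\alpha$, no such solution can lie in either regime, which is exactly~\eqref{eq:main_bound}; in particular no monotonicity of $p_{\mrm{dev},\delta}$ is needed, only that the two estimates hold on the stated ranges of $N$.

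For the success bound I would fix $\theta\in(0,1]$ and a truncation level $t>0$ and decompose $\tfrac1N\sum_i Y_i = \tfrac1N\sum_i Y_i\one_{Y_i\le t} + \tfrac1N\sum_i Y_i\one_{Y_i>t}$. The discarded mass satisfies $\E(Y\one_{Y>t})\le t^{-\theta}\E(Y^{1+\theta})$ (from $\one_{Y>t}\le(Y/t)^{\theta}$), and one picks $t^{\theta}$ a fixed multiple of $\E(Y^{1+\theta})/\delta$ so that this is $\le\delta/4$; Markov's inequality then controls the tail part. The bulk part has mean in $[1-\delta/4,1]$ and variance at most $\tfrac1N\E(Y^2\one_{Y\le t})\le\tfrac1N t^{1-\theta}\E(Y^{1+\theta})$ (from $Y^2\one_{Y\le t}\le Y^{1+\theta}t^{1-\theta}$, valid for $\theta\le1$), so Chebyshev's inequality yields $p_{\mrm{dev},\delta}(N)<p_\alpha$ once $N\gtrsim \delta^{-2}p_\alpha^{-1}\,t^{1-\theta}\E(Y^{1+\theta})$; substituting $t$ and taking logarithms gives $\ln N^\ast\le \tfrac1\theta\ln\E(Y^{1+\theta}) + \tfrac1\theta\ln c_{\delta,p_\alpha}$. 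For the failure bound I would fix $\theta\in(0,1]$ and a level $K>0$; on the event $\{\max_i Y_i\le K\}$, which has probability $\ge1-N\,\P(Y>K)\ge1-N/K$, one has $\tfrac1N\sum_i Y_i=\tfrac1N\sum_i Y_i\one_{Y_i\le K}$, whose mean is $\le K^{\theta}\E(Y^{1-\theta})$ (from $Y\one_{Y\le K}\le Y^{1-\theta}K^{\theta}$). Choosing $K^{\theta}$ a small multiple of $1/\E(Y^{1-\theta})$ makes this mean, hence (by Markov, together with the union bound above, for $N\lesssim K$) $\P(\tfrac1N\sum_i Y_i>1-\delta)$, small, so that $p_{\mrm{dev},\delta}(N)\ge\P(\tfrac1N\sum_i Y_i\le1-\delta)>p_\alpha$; this gives $\ln N^\ast\ge -\tfrac1\theta\ln\E(Y^{1-\theta}) - \tfrac1\theta\ln c_{\delta,p_\alpha}$.

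It then remains to symmetrize these two single-moment bounds into the form of~\eqref{eq:main_bound}. Let $\tilde\P$ be the tilted law $\d\tilde\P = Y\,\d\P$, so $\E_{\tilde\P}(\ln Y)=\E(Y\ln Y)=I$; Jensen's inequality applied to $x\mapsto e^{\pm\theta x}$ gives $\E(Y^{1+\theta})=\E_{\tilde\P}(e^{\theta\ln Y})\ge e^{\theta I}$ and $\E(Y^{1-\theta})=\E_{\tilde\P}(e^{-\theta\ln Y})\ge e^{-\theta I}$. Consequently $\tfrac1\theta\ln\E(Y^{1+\theta})\le I + \tfrac1\theta\ln\!\big(\E(Y^{1+\theta})\E(Y^{1-\theta})\big)$ and $-\tfrac1\theta\ln\E(Y^{1-\theta})\ge I - \tfrac1\theta\ln\!\big(\E(Y^{1+\theta})\E(Y^{1-\theta})\big)$, so the two bounds of the previous paragraph combine to $\big|\ln N^\ast-I\big|\le \tfrac1\theta\ln\!\big(\E(Y^{1+\theta})\E(Y^{1-\theta})\big)+\tfrac1\theta\ln c_{\delta,p_\alpha}$ for every $\theta\in(0,1]$; taking the infimum over $\theta\in[0,1]$ (with $\theta=0$ giving the trivial bound $+\infty$) is~\eqref{eq:main_bound}.

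I expect the failure (anti-concentration) bound to be the main obstacle: one must show that below the threshold the empirical average is bounded away from $1$ with high probability, which relies on the elementary fact that an average never exceeds its maximum, combined with the observation that in the relevant heavy-tailed regime the $Y$-mass \emph{not} concentrated on the extreme values is precisely what $\E(Y^{1-\theta})$ controls. The remaining delicate points are of bookkeeping: tracking how the various numerical constants (from the choices of $t$, $K$, and from Chebyshev/Markov) assemble into a single $c_{\delta,p_\alpha}$, and checking that the failure region $\{\ln N\le I-\text{RHS}\}$ is, for an appropriate such constant, contained in the range of $N$ where the argument applies -- this region being empty precisely when the moment spread is so large that the argument would otherwise break down. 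All of this is a reworking, under lower-order moment conditions, of the proof of Theorem~$1.1$ in~\cite{ChaDia}, to which we also refer for the companion case~\eqref{eq:failgen}.
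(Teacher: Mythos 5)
Your strategy is correct, and it takes a genuinely different route from the paper. The paper proves this statement (in the appendix, as Corollary~\ref{cor:ChaDia}) by quoting the two inequalities of Theorem~$1.1$ of~\cite{ChaDia} -- an upper bound on $p_{\mrm{dev},\delta}(N)$ of the form $\e^{-t/4}$ plus a tail term $\eta(\one_{\ln \rho > L + t/2})$, and an upper bound on the success probability of the form $\e^{-t/2}$ plus $\eta(\one_{\ln \rho \leq L - t/2})$ -- and then Chernoff-bounds the two indicator terms by the moments $\E(Y^{1\pm\theta})$, i.e.\ by R\'enyi entropy differences. You instead reprove the concentration and anti-concentration statements from scratch: truncation at level $t$ with $\one_{Y>t}\leq (Y/t)^\theta$ and $Y^2\one_{Y\leq t}\leq t^{1-\theta}Y^{1+\theta}$ followed by Markov and Chebyshev for the success side, and truncation at level $K$ with a union bound on $\max_i Y_i$ and $Y\one_{Y\leq K}\leq K^\theta Y^{1-\theta}$ followed by Markov for the failure side; you then recenter the two single-moment thresholds around $\E(Y\ln Y)$ via Jensen under the size-biased law $\d\tilde\P = Y\,\d\P$. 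That last step is needed in your route but not in the paper's, since Theorem~$1.1$ of~\cite{ChaDia} is already stated relative to $L=\Ent(\eta\mid\mu)$; in exchange, your argument is self-contained (only first and second moments of truncations), your logic correctly avoids any monotonicity assumption on $N\mapsto p_{\mrm{dev},\delta}(N)$, and it yields the bound with coefficient $1$ on $\frac1\theta\ln\p{\E(Y^{1+\theta})\E(Y^{1-\theta})}$, exactly as in~\eqref{eq:main_bound}, whereas the paper's appendix derivation naturally produces a factor $2$ in front of the entropy difference. One small fix in the bookkeeping you already flag: choosing $t^\theta$ proportional to $\E(Y^{1+\theta})/\delta$ only makes the discarded \emph{mean} of order $\delta$, so Markov on the tail part gives a constant probability, not one below $p_\alpha$; you must take $t^\theta$ of order $\E(Y^{1+\theta})/(\delta p_\alpha)$ (and similarly keep the $(1-p_\alpha)$-dependence in $K$), which only changes the numerical constant absorbed into $c(\delta,p_\alpha)^{1/\theta}$ and does not affect the form of the result.
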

The estimate~\eqref{eq:main_bound} result can be interpreted a concentration/anti-concentration inequality with lower order moments condition. The proof, discussed in Section~\ref{app:ChaDia}, is based on: i) an upper bound on the deviation probability $p_{\mrm{dev},\delta}(N)$ that decreases with the sample size according to a (slow) power-law $N^{-\theta/4}$, ii) an upper bound on the success probability $1-p_{\mrm{dev},\delta}(N)$ that increases with the sample size according to a (slow) power-law $N^{\theta/2}$ . \medskip

In the context of importance sampling, one has $\frac{1}{N^\ast} \sum_{i=1}^{N^\ast} Y_i = \eta^N(\one)$, $\E(Y \ln Y)=\Ent(\eta \mid \mu)$, and the logarithmic moments 
$$
\frac{1}{\theta} \ln \p{\E(Y^{1+\theta}} = \frac{1}{\theta} \ln \int \p{\frac{\d \eta }{\d \mu}}^\theta \d \eta \eqdef \Ent_{1+\theta}(\eta \mid \mu), \qquad \theta \in [-1,+\infty],
$$
are the order $1+\theta$ R\'enyi entropy of $\eta$ relative to $\mu$. The latter is increasing with $\theta$, and it is continuous when finite. $\Ent_{1} = \Ent$ is the usual relative entropy. We refer to~\cite{van2014renyi} for a review of properties of those entropies. \medskip

In our context, the estimate in~\eqref{eq:main_bound} is meaningful only in cases where the relative entropy $\E(Y \ln Y)$ is large as compared to the difference of R\'enyi entropies $\Ent_{1+\theta}(\eta \mid \mu)-\Ent_{1-\theta}(\eta \mid \mu)$. It is argued in ~\cite{ChaDia} that this situation happens quite generically, see the discussion in Section~\ref{app:ChaDia}. Some care however is required in practice because the constants in the right hand side of~\eqref{eq:main_bound} can be unsatisfactory; they are however certainly not sharp, and it may turn out that improved estimates can be obtained. \medskip

A conservative user of importance sampling might prefer to minimize the usual variance $\Var(Y)=\eta( \frac{\d \eta}{\d \mu}) -1$,  rather that $\Ent(\eta \mid \mu)$. However, minimizing upper bounds might lead to inefficient results depending on context. The estimate~\eqref{eq:main_bound} shows that $\Ent(\eta \mid \mu)$ is, loosely speaking, a compromise between an upper bound and a lower bound. This makes relative entropy an interesting practical criteria. \medskip

Moreover, denoting the right hand side in~\eqref{eq:main_bound} by 
$$ R(Y) \eqdef \inf_{\theta \in [0,1]} \b{ \frac{1}{\theta} \ln \p{\E(Y^{1+\theta}) \E(Y^{1-\theta}) } + \frac{\ln c}{\theta} },$$
we provide in the next section an important class of examples for which, in a quite generic asymptotics, the following holds:
\begin{equation}\label{eq:exa_as}
\ln \Var(Y) - \E(Y \ln Y) \gg  R(Y).
\end{equation}
This implies, according to~\eqref{eq:main_bound}, that the sample size estimate with $\e^{\E(Y \ln Y)}$ is arbitrarily more accurate than $\Var(Y)$, in the sense that:
\[
 \abs{\ln N^\ast - \ln \Var(Y) }  \gg \abs{\ln N^\ast -  \E(Y \ln Y)},
\]
that is, in other words, variance is unwarrantedly too large.

\subsection{A motivating example}\label{sec:example}
Consider the class distribution given by
$$
\begin{cases}
& \P(Y=0) = 1- p_1 - p_2, \\
& \P(Y = l_1) = p_1, \\
& \P(Y = l_2) = p_2,
\end{cases}
$$
with the condition
$$
\alpha \eqdef p_1 l_1 = 1 -p_2l_2,
$$
which is equivalent to $\E Y = 1$. We assume that 
$$
r \eqdef \frac{l_2}{l_1} \to 0, \qquad l_1 \to +\infty,
$$
together with
$$
\alpha \to 0,
$$
which means that a large value $l_1$ has a small contribution $\alpha$ in the average $\E(Y)$.

Straightforward calculations yields:
$$
\ln \E(Y^2)= \ln(\alpha l_1 + (1-\alpha) l_2) = \ln l_1 + \ln(\alpha + (1-\alpha) r) ,
$$
and we now assume that
$$
\alpha \gg r^{1-\alpha} \geq r
$$
which implies that the large value $l_1$ has a dominating contribution in the variance (contrary to the mean). \medskip

One can now compute
$$
\E(Y \ln Y) = \alpha \ln l_1 + (1-\alpha) \ln l_2 = \ln l_1 + (1-\alpha) \ln r,
$$
so that the difference with the log-variance diverge:
$$
\ln \E(Y^2) - \E(Y \ln Y) \sim  \ln(\alpha) - (1-\alpha) \ln r = \ln \frac{\alpha}{r^{1-\alpha}} \to + \infty.
$$

On the other hand
$$
\frac{1}{\theta} \ln \E(Y^{1+\theta}) = \frac1\theta \ln \p{\alpha l_1^\theta + (1-\alpha)l_2^\theta }  = \ln l_1 + \frac1\theta \ln \p{\alpha + (1-\alpha) r^\theta },
$$
so that
$$
\frac{1}{\theta} \ln \p{\E(Y^{1+\theta}) \E(Y^{1-\theta}) } = \frac1\theta \ln \p{1+ \alpha(1-\alpha)(r^\theta+r^{-\theta}-2)}.
$$

Now, taking
$$
\theta = \frac{-\ln \alpha}{- \ln r} \to 0
$$
implies that $ \alpha r^{-\theta}$ remains bounded so that
$$
\inf_{\theta \in [0,1]} \b{ \frac{1}{\theta} \ln \p{\E(Y^{1+\theta}) \E(Y^{1-\theta}) }  + \frac{\ln c}{\theta} } \leq \frac{- \ln r}{- \ln \theta} \ln(2c),
$$
and since
$$
\frac{- \ln r}{- \ln \theta} \ll -\ln \frac{r^{1-\alpha}}{\alpha},
$$
which implies that ~\eqref{eq:exa_as} holds true. \medskip

\appendix
\section{More on the Chatterjee-Diaconis bounds}\label{app:ChaDia}
In this section, $\eta \in \Proba(E)$ will denote generic target probability distributions, and $\mu \in \Proba(E)$ a generic proposal. The importance sampling estimator is given by for any test function $\ph$.
\begin{equation*}
 \eta^N(\ph) \eqdef \frac1N \sum_{i=1}^N \ph(X_i) \, \frac{\d \eta}{\d \mu}(X_i) \xrightarrow[n \to +\infty]{\text{a.s.}} \eta(\ph).
\end{equation*}
The failure probability of importance sampling is defined using a supremum over $\L^2(\nu)$-normalized test functions, and is given, for a precision threshold $\delta$ by:
\begin{equation} \label{eq:pfail1}
p_{\mrm{dev},\delta}(N) \eqdef \sup_{\ph: \, \eta(\ph^2)=1} \P\p{ \abs{\eta^N(\ph) - \eta(\ph)} \geq \delta  }.
\end{equation}
One can alternatively consider the failure probability of estimating a normalizing constant
\begin{equation} \label{eq:pfail2}
p_{\mrm{dev},\delta}(N) \eqdef \P\p{ \abs{\eta^N(\one) - \eta(\one)} \geq \delta  }.
\end{equation}
If $p_\alpha$ denotes a failure probability, a critical sample size $N^\ast = N^\ast_{p_\alpha,\delta}(\eta \mid \mu) $ \emph{required} for importance sampling is then defined by 
\begin{equation}\label{eq:Ncrit}
p_{\mrm{dev},\delta}(N^\ast) = p_\alpha .
\end{equation}
%
which is precisely the sample size achieving $\delta$-accuracy with success probability $1-p_\alpha$. \medskip

The main theorems of~\cite{ChaDia} and the subsequent examples are not so easy to state and interpret. We propose in this section a slightly weaker reformulation of the main result Theorem~$1.1$ of~\cite{ChaDia} using R\'enyi entropies. \medskip

R\'enyi entropies are the power-law generalization of relative entropy and are defined by 
$$
\Ent_{\alpha}(\eta \mid \mu) \eqdef \frac{1}{\alpha} \ln \int \p{\frac{\d \nu }{\d \mu}}^\alpha \d \nu, \qquad \alpha \in [0,+\infty] 
$$
if $\nu \ll \mu$, while $\Ent_{\alpha}(\eta \mid \mu)=+\infty$ otherwise. $\alpha \mapsto \Ent_{\alpha}(\nu \mid \mu)$ is increasing and continuous when finite. $\Ent_{1} = \Ent$ is the usual relative entropy. We refer to~\cite{van2014renyi} for a review of properties of those entropies. \medskip

A corollary of the main result (Theorem~$1.1$) in~\cite{ChaDia} is then the following:
\begin{Cor}[Theorem~$1.1$,~\cite{ChaDia}]\label{cor:ChaDia} Let $\eta,\mu \in \Proba(E)$ be two given probability distribution, and let $N^\ast$ be a sample size verifying~\eqref{eq:pfail1}-\eqref{eq:Ncrit} or \eqref{eq:pfail1}-\eqref{eq:Ncrit} with $\delta,p_\alpha \in (0,1)$. There is a numerical constant $c(\delta,p_\alpha) \leq \max\p{ \p{\frac{3}{p_\alpha \delta}}^{4},\p{\frac2{(1-p_\alpha)(1-\delta)}}^2}$ such that 

$$
\abs{\ln N^\ast - \Ent(\eta \mid \mu)} \leq \inf_{\theta \in [0,1]} \b{ 2 \p{\Ent_{1+\theta}(\eta \mid \mu) - \Ent_{1-\theta}(\eta \mid \mu) } + \frac{\ln c}{\theta} }
$$

\end{Cor}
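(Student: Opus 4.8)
The plan is to reprove, in the present slightly weaker R\'enyi form and with the constants tracked, the two-sided estimate of Theorem~$1.1$ of~\cite{ChaDia}, handling uniformly the two failure probabilities~\eqref{eq:pfail1}--\eqref{eq:pfail2}. The bookkeeping device is the elementary identity, valid for $\theta\in(0,1]$ with $Y=\d\eta/\d\mu$ under $\mu$,
\begin{equation*}
\tfrac{1}{\theta}\ln\!\big(\mu(Y^{1+\theta})\,\mu(Y^{1-\theta})\big)=\Ent_{1+\theta}(\eta\mid\mu)-\Ent_{1-\theta}(\eta\mid\mu),
\end{equation*}
which turns any bound phrased through $\mu(Y^{1\pm\theta})$ into one phrased through R\'enyi entropies, the right-hand side being finite exactly when the $(1+\theta)$-moment is. Fix such a $\theta$ (nothing to prove if $\Ent_{1+\theta}(\eta\mid\mu)=+\infty$). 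It then suffices to exhibit thresholds $N_-\le N_+$ with $\ln N_\pm=\Ent(\eta\mid\mu)\pm r_\pm(\theta)$ such that $p_{\mrm{dev},\delta}(N)<p_\alpha$ for $N\ge N_+$ and $p_{\mrm{dev},\delta}(N)>p_\alpha$ for $N\le N_-$; any $N^\ast$ solving~\eqref{eq:Ncrit} then satisfies $N_-\le N^\ast\le N_+$, hence $\abs{\ln N^\ast-\Ent(\eta\mid\mu)}\le\max(r_-(\theta),r_+(\theta))$, and one concludes by taking $\inf_\theta$.

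For the upper threshold $N_+$ (the concentration estimate, responsible for the exponent $N^{-\theta/4}$ and the fourth power in $c$) I would truncate the weight at a level $K$. On the event $\{\forall i,\ Y_i\le K\}$ one has, for every test function with $\eta(\ph^2)=1$, $\mu\big((\ph Y\one_{Y\le K})^2\big)\le K\,\mu(\ph^2 Y)=K\,\eta(\ph^2)=K$, so Chebyshev gives $\P(\abs{\eta^N(\ph)-\eta(\ph)}\ge\delta)\le K/(\delta^2 N)$ uniformly in $\ph$ (the small bias from truncating the mean is controlled by the same fractional-moment estimate); the complementary event has probability at most $N\,\mu(Y>K)\le N K^{-(1+\theta)}\mu(Y^{1+\theta})$ by Markov. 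Optimizing $K$ balances the two terms and yields $p_{\mrm{dev},\delta}(N)\le C(\delta)\big(e^{\theta\,\Ent_{1+\theta}(\eta\mid\mu)}/N^{\theta}\big)^{1/(2+\theta)}$, which falls below $p_\alpha$ once $\ln N\ge\Ent(\eta\mid\mu)+\tfrac{1}{\theta}\ln\big(\tfrac{3}{p_\alpha\delta}\big)^{4}$ after crude bounding of the exponents; this is $r_+(\theta)$. The $\L^2(\eta)$-normalized supremum is harmless here precisely because the truncated second-moment bound is uniform in $\ph$, and~\eqref{eq:pfail2} is the special case $\ph\equiv\one$.

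For the lower threshold $N_-$ (the anti-concentration estimate, responsible for the exponent $N^{\theta/2}$ and the square in $c$) it suffices to find a single bad $\ph$, and $\ph\equiv\one$ works: I would show $\P(\eta^N(\one)\le 1-\delta)$ is close to $1$ when $N$ is small. The mechanism is that the mean $\mu(Y)=1$ is carried by the $\mu$-atypical set $\{Y>e^{s}\}$; choosing $s$ so that $\mu(Y\one_{Y>e^s})$ exceeds a fixed fraction of $1$ forces $\eta^N(\one)\le 1-\delta$ on the event that no sample reaches this set (plus a small conditional Markov step), an event whose complement has probability at most $N\,\mu(Y>e^s)$, controlled by Markov with the $(1+\theta)$-moment, while the level $s$ at which such a fraction of the mass sits is quantified by the $(1-\theta)$-moment, i.e. by $\Ent_{1-\theta}(\eta\mid\mu)$, since that moment measures how much of $\eta$ lives where $Y$ is small. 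Carrying this out yields $1-p_{\mrm{dev},\delta}(N)\le\tfrac{2}{1-\delta}\,e^{-\frac{\theta}{2}(\Ent(\eta\mid\mu)-2(\Ent_{1+\theta}(\eta\mid\mu)-\Ent_{1-\theta}(\eta\mid\mu)))}\,N^{\theta/2}$, which lies below $1-p_\alpha$ once $\ln N\le\Ent(\eta\mid\mu)-2(\Ent_{1+\theta}(\eta\mid\mu)-\Ent_{1-\theta}(\eta\mid\mu))-\tfrac{1}{\theta}\ln\big(\tfrac{2}{(1-p_\alpha)(1-\delta)}\big)^{2}$; this is $-r_-(\theta)$. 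Then $\max(r_-,r_+)\le 2(\Ent_{1+\theta}(\eta\mid\mu)-\Ent_{1-\theta}(\eta\mid\mu))+\tfrac{1}{\theta}\ln c$ with $c$ the stated maximum, which gives the claim after $\inf_\theta$.

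I expect the anti-concentration step to be the main obstacle. The concentration half is a routine truncate-and-Chebyshev computation with explicit constants; the delicate point is arranging the "a definite fraction of $\eta$'s mass sits above level $e^s$, and a size-$N$ sample misses it" argument so that the failure level $s$ is governed by $\Ent_{1-\theta}$ rather than by a cruder quantity such as $\Ent_2$, and so that the final constant comes out as $\big(2/((1-p_\alpha)(1-\delta))\big)^{2}$ — which is exactly where the careful two-moment bookkeeping of~\cite{ChaDia} is needed.
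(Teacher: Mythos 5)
Your overall architecture is, in its second half, exactly what the paper does: the paper takes the two inequalities of Theorem~$1.1$ of~\cite{ChaDia} as given, namely $p_{\mrm{dev},\delta}(N)\le\frac1\delta\e^{-t/4}+\frac2\delta\sqrt{\eta(\one_{\ln\rho>L+t/2})}$ for $t=\ln N-L\ge0$ and $1-p_{\mrm{dev},\delta}(N)\le\frac1{1-\delta}\e^{-t/2}+\frac1{1-\delta}\,\eta(\one_{\ln\rho\le L-t/2})$ for $t=L-\ln N\ge0$ (with $L=\Ent(\eta\mid\mu)$, $\rho=\d\eta/\d\mu$), then applies the Chernoff bounds $\eta(\rho^{\theta})=\e^{\theta\Ent_{1+\theta}(\eta\mid\mu)}$ and $\eta(\rho^{-\theta})=\e^{-\theta\Ent_{1-\theta}(\eta\mid\mu)}$ to the two tail probabilities, evaluates at $N=N^\ast$ and solves for $t$; that is precisely your moment-to-R\'enyi ``bookkeeping'' and your sandwich $N_-\le N^\ast\le N_+$. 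The genuine difference is that you also propose to reprove the two probabilistic inequalities of~\cite{ChaDia} from scratch, and that is where the gaps are.

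Two concrete problems. (i) In the anti-concentration half you control the probability that some sample lands in $\set{Y>\e^{s}}$ by Markov with the $(1+\theta)$-moment. This is too lossy: with the natural choice $s=L-t/2$ that term equals $\e^{-(1-\theta)t/2+\theta(\Ent_{1+\theta}(\eta\mid\mu)-L)}$, which for $\theta$ close to $1$ no longer decays in $t$ (and tuning $s$ does not rescue the endpoint $\theta=1$ when $\Ent_{1+\theta}-\Ent_{1-\theta}$ is small), so your claimed intermediate bound $1-p_{\mrm{dev},\delta}(N)\le\frac{2}{1-\delta}\e^{-\frac{\theta}{2}(L-2(\Ent_{1+\theta}-\Ent_{1-\theta}))}N^{\theta/2}$ does not follow from the steps you describe, and since the corollary's bound must hold for every $\theta\in[0,1]$ before taking the infimum, the statement is not obtained at those $\theta$. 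What is needed, and what~\cite{ChaDia} and the paper use, is the plain first-moment bound $\mu(Y>\e^{s})\le\e^{-s}$ (valid because $\mu(Y)=1$); only the $(1-\theta)$-entropy should enter on this side, via $\eta(\ln\rho\le L-t/2)\le\e^{\theta(L-\Ent_{1-\theta})-\theta t/2}$, which yields $t\le 2\p{L-\Ent_{1-\theta}}+\frac{2}{\theta}\ln\frac{2}{(1-p_\alpha)(1-\delta)}$. (ii) The corollary asserts a specific numerical bound $c(\delta,p_\alpha)\le\max\p{\p{\frac{3}{p_\alpha\delta}}^{4},\p{\frac{2}{(1-p_\alpha)(1-\delta)}}^{2}}$; your truncate-and-Chebyshev concentration half produces a bound of the form $C(\delta)\p{\e^{\theta\Ent_{1+\theta}}N^{-\theta}}^{1/(2+\theta)}$ with a different $(\delta,p_\alpha)$-dependence, plus a truncation-bias term you wave away, and whether the resulting constant fits under the stated maximum is asserted (``after crude bounding of the exponents'') but not carried out -- indeed you yourself defer the delicate two-moment bookkeeping to~\cite{ChaDia}. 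Since the statement is explicitly a corollary of Theorem~$1.1$ of~\cite{ChaDia}, the economical route (the paper's) is to quote that theorem's two inequalities and supply only the Chernoff/R\'enyi step; if you want a self-contained proof you must repair (i) and actually track the constants in (ii).
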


Corollary~\ref{cor:ChaDia} implies that
$$
\ln N_{p_\alpha,\delta}(\eta \mid \mu) \mathop{\sim} \Ent(\eta \mid \mu)
$$ 
for asymptotics such that
\begin{equation}\label{eq:asympt}
\Ent(\eta \mid \mu) \mathop{\gg} \inf_{\theta \in (0,1)} \, \,  \Ent_{1+\theta}(\eta \mid \mu) - \Ent_{1-\theta}(\eta \mid \mu) + \frac{1}{\theta} \ln c(\delta,p_\alpha),
\end{equation}
where we stress that $c(\delta,p_\alpha)$ is numerical (independent of $\eta,\mu$). \medskip

In Section~$3$ of~\cite{ChaDia}, S.~Chatterjee and P.~Diaconis argued that many (high dimensional) toy models inspired by statistical mechanics indeed satisfy behaviors similar to~\eqref{eq:asympt} when $\Ent(\eta \mid \mu)$ is large. More specifically, one can first remark that R\'enyi entropies satisfy (if finite for $\abs{\theta}$ small):
$$
\Ent_{1+\theta}(\eta \mid \mu) = \Ent(\eta \mid \mu) + \frac{\theta}{2} \var_\eta( \ln \frac{\eta}{\mu} ) + \bigO(\theta^3).
$$
The authors then showed that in various cases of interest it holds
$
\var_\eta( \ln \frac{\eta}{\mu} ) = \bigO \p{ \Ent(\eta \mid \mu) = \eta( \ln \frac{\eta}{\mu}) } ,
$
which in other words means tha the mean and variance of the log-likelihood $\ln \frac{\eta}{\mu}$ are of the same order. In examples of Section~$3$ of~\cite{ChaDia}, these two quantities scale like the dimension of the considered system. Choosing $\theta$ of order
$$
\theta = \bigO( \Ent(\eta \mid \mu)^{-1/2})
$$
then yields the asymptotic condition~\eqref{eq:asympt}. We provided in Section~\ref{sec:example} a simpler, practically generic, example for which this condition is also satisfied.
\begin{proof}[Proof of Corollary~\ref{cor:ChaDia}] Let us recall the notation of Theorem~$1.1$ in~\cite{ChaDia}. The importance sampling estimator $\eta^N(\ph)$ is denoted $I^n(f)$. The authors also use the notation $L=\Ent(\eta\mid\mu)$ and $t = \pm(\ln N - L)$.We will also denote $\Ent_{1+\theta}(\nu \mid \mu) = L_{1+\theta}$.\medskip

Denote $t=\ln N - L \geq 0$. Using Markov inequality, the first inequality in Theorem~$1.1$ can be rewritten:
\[
p_{\mrm{dev},\delta}(N) \leq \frac1\delta\e^{-t/4}+\frac2\delta\sqrt{\eta(\one_{\ln \rho > D + t/2})}.
\]
A routine upper bound of Chernoff-type yields, for any $\theta \geq 0$,
\[
 \eta(\one_{\ln \rho > L + t/2}) \leq \e^{-\theta L - \theta t /2 } \underbrace{\eta(\rho^\theta)}_{\exp{\theta \Ent_{1+\theta}(\mu \mid \eta)}} = \e^{ \theta(L_{1+\theta} - L) - \theta t /2 } ,
\]
so that for $\theta < 1$:
\[
 p_{\mrm{dev},\delta}(N) \leq \frac3\delta\e^{-\theta t/4 + \theta(L_{1+\theta} - L)/2}.
\]
Denoting $ c =\p{\frac{3}{p_\alpha\delta}}^4$ for $p_{\mrm{dev},\delta}(N^\ast)=p_\alpha$, it yields
$$
\ln N - L = t \leq 2\p{L_{1+\theta} - L} + \frac{\ln c}{\theta},
$$ 
 the claimed upper bound on $\ln N^\ast - L$ follows. \medskip

Similarly, let us now denote $t=-\ln N + L \geq 0$. The second inequality in Theorem~$1.1$ in~\cite{ChaDia} can be rewritten:
\begin{align*}
1 - p_{\mrm{dev},\delta}(N) &\leq P(\eta^N(\one)-1 \geq - \delta) \\
& \leq \frac1{1-\delta}\e^{-t/2}+\frac{1}{1-\delta} \eta(\one_{\ln \rho \leq D - t/2}). \\
\end{align*}
A similar routine upper bound of Chernoff-type yields
\[
 \eta(\one_{\ln \rho \leq L - t/2}) \leq \e^{\theta L - \theta t /2 } \underbrace{\eta(\rho^{-\theta})}_{\exp{\theta \Ent_{1-\theta}(\mu \mid \eta)}} = \e^{\theta (L-L_{1-\theta}) - \theta t /2 },
\]
so that 
\begin{align*}
1 - p_{\mrm{dev},\delta}(N) \leq \frac{2}{1-\delta} \e^{ - \theta t /2 + \theta (L-L_{1-\theta})}.
\end{align*}
Denoting $ c =\p{\frac{2}{(1-p_\alpha)(1-\delta)}}^2$ for $p_{\mrm{dev},\delta}(N^\ast)=p_\alpha$, it yields
$$
L-\ln N  = t \leq 2\p{L-L_{1-\theta} - L} + \frac{\ln c}{\theta},
$$ 
 the claimed lower bound on $\ln N^\ast - L$ follows. \medskip
\end{proof}

\section{A measurable selection lemma}
We first start by a general result of measure theory (see Theorem~$10.8.3$in Bogachev Vol.~$2$ \cite{bogachev2006measure2}), which states the existence of an independent ''complement'' of atomless conditional measures.
\begin{The}[Independent Complement]\label{the:comp} Let $T: E \to F$ denotes a measurable map between two standard Borel spaces. Let $\pi \in \Proba(E)$ denotes a probability such that for $T \sharp \pi$-almost all $t \in F$,  $\pi( \,\, .\mid T=t)$ is atomless. Let $\lambda$ denotes the usual Lebesgue measure on the interval $[0,1]$. Then there exists a measurable function $S: E \to [0,1]$ such that, i) $(T,S):E \to F \times [0,1]$ is one-to-one and, ii) 
$$
(T,S) \sharp \pi = T\sharp \pi \otimes \lambda;
$$
that is, if $X$ has distribution $\pi$, then $T(X)$ and $S(X)$ are independent with distribution $\p{T\sharp} \pi$ and $\lambda$ respectively. 
\end{The}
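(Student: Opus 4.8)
This is a classical measure-theoretic fact — it is Theorem~10.8.3 in~\cite{bogachev2006measure2} — and the plan is to reconstruct it from three ingredients: disintegration of $\pi$ along $T$, a version of the isomorphism theorem ``atomless probability space $\cong([0,1],\lambda)$'' carried out measurably in a parameter, and a gluing step.

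First I would disintegrate $\pi$ along $T$: since $E$ and $F$ are standard Borel, there is a measurable family $t \mapsto \pi_t \eqdef \pi(\,\cdot\mid T=t)$ of probability measures on $E$ with $\pi(A)=\int_F \pi_t(A)\,T\sharp\pi(\d t)$ for Borel $A$, and by hypothesis $\pi_t$ is atomless for $T\sharp\pi$-a.e.\ $t$. Likewise the target $T\sharp\pi\otimes\lambda$ disintegrates along the first projection into the constant family $\lambda$. The statement then amounts to asking that these two disintegrations be isomorphic \emph{over} $F$, the fibre isomorphism over $t$ being a Borel isomorphism of $(E,\pi_t)$ onto $([0,1],\lambda)$ depending measurably on $t$.

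The core step is to produce such a measurable family of fibre maps. For a single atomless probability $\nu$ on a standard Borel space the isomorphism to $([0,1],\lambda)$ is built in the usual way: fix a Borel embedding into $[0,1]$, push $\nu$ forward, and compose with the (continuous, since $\nu$ is atomless) cumulative distribution function, which transports $\nu$ onto $\lambda$ but is injective only off a $\nu$-null set; that residual null set is then reabsorbed into a genuine Borel bijection using the Borel isomorphism theorem — this is the point where the standard Borel hypothesis is really used, and where the final ``one-to-one'' conclusion comes from, the mod-$0$ version alone being insufficient. I would run this construction with every choice made from selectors measurable in $t$, obtaining a jointly measurable map $(t,y)\mapsto h_t(y)$ with $h_t:[0,1]\to[0,1]$ injective for \emph{every} $t$ and measure-preserving (pushing $\psi\sharp\pi_t$ to $\lambda$) for a.e.\ $t$, where $\psi:E\to[0,1]$ is a fixed Borel embedding. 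Then set $S(x)\eqdef h_{T(x)}(\psi(x))$.

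It remains to glue. The map $x\mapsto(T(x),S(x))$ is measurable; it is one-to-one because $T$ separates the fibres and, on each fibre, $S$ is the composition of the injection $\psi$ with the injection $h_t$; and for every Borel $A\subseteq F\times[0,1]$ with $t$-sections $A_t$,
\begin{align*}
(T,S)\sharp\pi(A)&=\int_F \pi_t\bigl(\{x:(t,S(x))\in A\}\bigr)\,T\sharp\pi(\d t)\\
&=\int_F \lambda(A_t)\,T\sharp\pi(\d t)=(T\sharp\pi\otimes\lambda)(A),
\end{align*}
using the a.e.\ measure-preserving property of $h_t$. The main obstacle in this plan is exactly the core step: making the isomorphism theorem depend measurably on the parameter $t$ while keeping the fibre maps honestly injective rather than only injective mod-$0$; since this is precisely the content of~\cite[Thm.~10.8.3]{bogachev2006measure2}, I would in the end invoke that reference for the measurable-selection technicalities.
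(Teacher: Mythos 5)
Your plan is fine, but note that the paper does not prove this statement at all: it is quoted verbatim as Theorem~10.8.3 of Bogachev (Vol.~2) and used as a black box, which is exactly where your sketch also ends up, since you defer the genuinely hard point (making the fibre isomorphisms onto $([0,1],\lambda)$ measurable in $t$ while keeping them honestly injective, not just injective mod~$0$) to that same reference. So in substance you take the same route as the paper; your disintegrate--parametrize--glue outline is a correct reconstruction of the standard argument behind the cited theorem, with the only point worth keeping in mind being that on the $T\sharp\pi$-null set of $t$ where $\pi(\,\cdot\mid T=t)$ may have atoms you still need the fibre map to be injective (measure preservation there is irrelevant), which your ``injective for every $t$, measure-preserving for a.e.\ $t$'' formulation already accommodates.
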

We next use the above theorem to prove the existence of distributions in a given half-space with prescribed conditional entropy.

\begin{Lem}\label{lem:meas1} Let $T: E \to F$ denotes a measurable map between two standard Borel spaces. Let $\pi \in \Proba(E)$ denotes a probability such that for $T \sharp \pi$-almost all $t$,  $\pi( \,\, .\mid T=t)$ is atomless. For each $\eps \in (0,1)$ and all bounded measurable function $F:E \to \R$, there exists a measurable set $A_{\eps,F} \subset E$  verifying for $T \sharp \pi$-almost all $t$
$$
\pi(A_{f,h}\mid T = t) = \eps
$$
as well as
$$
\pi(F \mid A_{f,h},\, T=t) \geq \pi(F \mid T=t).
$$
\end{Lem}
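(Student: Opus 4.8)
The plan is to use the Independent Complement of Theorem~\ref{the:comp} to uniformize the fibers of $T$: this is precisely what turns the fiberwise task into a measurable selection problem with an explicit solution. Let $S : E \to [0,1]$ be the map provided by Theorem~\ref{the:comp}, so that $(T,S)$ is one-to-one and $(T,S)\sharp \pi = T\sharp \pi \otimes \lambda$. By the Lusin--Souslin theorem an injective Borel map between standard Borel spaces has Borel image and Borel inverse on that image; hence $(T,S)$ is a Borel isomorphism onto a Borel subset of $F \times [0,1]$ of full $T\sharp\pi \otimes \lambda$-measure. It therefore suffices to build a Borel set $\widetilde A \subset F \times [0,1]$ whose fibers $\widetilde A_t = \set{s : (t,s) \in \widetilde A}$ satisfy $\lambda(\widetilde A_t) = \eps$ and $\tfrac1\eps \int_{\widetilde A_t} \widetilde F_t \, \d \lambda \geq \int_0^1 \widetilde F_t \, \d \lambda$ for $T\sharp\pi$-a.e.\ $t$, where $\widetilde F = F \circ (T,S)^{-1}$ is bounded Borel and $\widetilde F_t = \widetilde F(t, \cdot)$; one then sets $A_{\eps,F} = (T,S)^{-1}(\widetilde A)$, and the defining properties transport back through the disintegration $T\sharp\pi\otimes\lambda = \int \delta_t\otimes\lambda \; T\sharp\pi(\d t)$ to the conditional statements of the lemma.

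The set $\widetilde A$ will be, fiber by fiber, the ``top $\eps$-part'' of $\widetilde F_t$. First I would define the conditional upper quantile $q(t) = \inf\set{a \in \Q : \lambda(\set{s:\widetilde F_t(s) \leq a}) \geq 1-\eps}$, which is measurable in $t$ by Fubini. The open super-level set $\set{s : \widetilde F_t(s) > q(t)}$ has $\lambda$-mass $p(t) \leq \eps$, while the level set $\set{s : \widetilde F_t(s) = q(t)}$ has mass at least $\beta(t) \eqdef \eps - p(t) \geq 0$. To add exactly mass $\beta(t)$ of this level set in a jointly measurable way, set $r(t) = \inf\set{r : \lambda(\set{s \leq r : \widetilde F_t(s) = q(t)}) = \beta(t)}$; the inner function of $r$ is $1$-Lipschitz and attains $\beta(t)$, so $r(t)$ is well defined and, again by Fubini and a countable-infimum argument, measurable in $t$. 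Then I would take
$$
\widetilde A \eqdef \set{(t,s) : \widetilde F_t(s) > q(t)} \cup \set{(t,s) : \widetilde F_t(s) = q(t), \ s \leq r(t)},
$$
a Borel set with $\lambda(\widetilde A_t) = p(t) + \beta(t) = \eps$ for a.e.\ $t$, which pulls back to $\pi(A_{\eps,F} \mid T=t) = \eps$.

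For the averaging inequality, observe that $\widetilde F_t \geq q(t)$ on $\widetilde A_t$ and $\widetilde F_t \leq q(t)$ on $[0,1] \setminus \widetilde A_t$. Writing $m(t) = \int_0^1 \widetilde F_t \, \d \lambda$, $a(t) = \tfrac1\eps \int_{\widetilde A_t} \widetilde F_t \, \d \lambda$ and $b(t) = \tfrac1{1-\eps}\int_{[0,1]\setminus\widetilde A_t} \widetilde F_t \, \d \lambda$, one has $m(t) = \eps\, a(t) + (1-\eps) b(t)$ with $b(t) \leq q(t) \leq a(t)$, hence $m(t) \leq a(t)$. Transporting back through the isomorphism $(T,S)$, this reads exactly $\pi(F \mid T=t) \leq \pi(F \mid A_{\eps,F},\, T=t)$ for $T\sharp\pi$-a.e.\ $t$, which is the second required property.

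The main obstacle is entirely measure-theoretic bookkeeping: guaranteeing that the fiberwise objects $q(t)$ and $r(t)$, and therefore $\widetilde A$, are jointly measurable in $(t,s)$, and correctly handling the atom that the law of $\widetilde F_t$ may carry at $q(t)$ \emph{even though} $\pi(\cdot \mid T=t)$ is atomless --- this atom-splitting step is the sole reason the independent complement $S$ is invoked (a constant $F$ on a fiber being the extreme case). Once the uniformization of Theorem~\ref{the:comp} is in place, everything else reduces to routine applications of Fubini's theorem and of the existence of regular conditional probabilities on standard Borel spaces.
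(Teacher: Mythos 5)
Your proof is correct and follows essentially the same route as the paper: it uses the independent complement $S$ from Theorem~\ref{the:comp} to take the super-level set of $F$ at a fiberwise quantile and split the level set with the uniform coordinate so that each conditional mass is exactly $\eps$, and it concludes with the same ``top part dominates the mean'' averaging argument. The only differences are bookkeeping: you transport the problem to $F\times[0,1]$ via the Borel isomorphism $(T,S)$ (invoking Lusin--Souslin) and get measurability of the quantile functions by rational infima and Tonelli, whereas the paper works directly on $E$ and establishes measurability of $(y_\eps(t),z_\eps(t))$ by the dyadic approximation of Lemma~\ref{lem:meas2}.
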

\begin{proof} The existence a measurable complement (Theorem~\ref{the:comp}) implies \textit{a fortiori}, the existence of a measurable function $S:E \to [0,1]$ such that, for $T \sharp \pi$-almost all $t$, $S \sharp \pi( \,\, .\mid T=t)$ is the Lebesgue distribution. \medskip 
Let us consider the pair $(y_\eps(t),z_\eps(t)) \in \R \times [0,1]$ defined by 
$$
\pi_t\p{\set{ F > y_\eps(t)} \cup \set{F = y_\eps(t) \,\&\, S \geq z_\eps(t) }} = \eps;
$$
The above equality being required for $t$ in a measurable set of $T \sharp \pi$-measure $1$ on which $S \sharp \pi( \,\, .\mid T=t)$ is the Lebesgue distribution. We conventionally set $(y_\eps(t),z_\eps(t))=(0,0)$ elsewhere. By Lemma~\ref{lem:meas2}, $t\mapsto (y_\eps(t),z_\eps(t))$ is a uniquely defined measurable map.\medskip

We can then consider the measurable set
\[
  A_{f,h} \eqdef  \set{x\in E: \, F(x) > y_\eps(T(x))} \cup \set{x\in E : \, F(x) = y_\eps(T(x)) \,\&\, S(x) \geq z_\eps(T(x)) } .
\]
%
 
Let us now remark that for any subset $B \subset E$ 
the function $F$ takes greater values on the set
$$
A \eqdef \set{F > y} \cup \p{ \set{F=y} \cap B}
$$
than on its complementary $A^c$. Now for any probability $\eta$ on $E$
\[ 
\int F \d \eta = \eta(A) \d \eta(F\mid A) + (1-\eta(A)) \eta(F \mid A^c),
\]
which yields $\eta( F \mid A) \geq \eta(F)$. One can apply this  argument in our case to $\eta = \pi( \,\, . \mid T=t)$ and $A=A_{\eps,F}$ in order to obtain $\pi(F \mid A_{f,h}\, \& \, T=t) \geq \pi(F \mid  T=t)$ for $T\sharp \pi$-almost all $t$.
\end{proof}

Finally, we give an elementary lemma which proves the measurability of the set $A_{f,h}$ constructed in the proof of Lemma~\ref{lem:meas1} above.

\begin{Lem}\label{lem:meas2} Let $E$ and $F$ denotes two measurable spaces, and let $t\in F \mapsto \pi_t \in \Proba(E)$ denotes a measurable probability kernel ($t \mapsto \pi_t(A)$ is measurable for any measurable subset $A \subset E$). Assume given two measurable bounded functions $F: E \to \R$ and $G \to [0,1]$ such that the push-forward probability $G \sharp \pi_t$ is the Lebesgue measure for all $t \in F$. For each $t \in F$ and each $\eps \in (0,1)$, there exists a unique pair $(y_\eps(t),z_\eps(t)) \in \R \times [0,1]$ such that
$$
\pi_t\p{\set{ F > y_\eps(t)} \cup \set{F = y_\eps(t) \,\&\, S \geq z_\eps(t) }} = \eps;
$$
moreover, the map $t \mapsto (y_\eps(t),z_\eps(t))$ is measurable.
\end{Lem}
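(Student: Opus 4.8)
The plan is to prove Lemma~\ref{lem:meas2} by first establishing, for each fixed $t$, the existence and uniqueness of the pair $(y_\eps(t),z_\eps(t))$, and then upgrading this pointwise construction to a measurable selection in $t$. For the fixed-$t$ statement, I would introduce the (right-continuous, non-increasing) tail function $\Psi_t(y) \eqdef \pi_t(\set{F>y})$ and the complementary ``lower'' tail $\Psi_t^-(y) \eqdef \pi_t(\set{F \geq y})$, so that the jump $\Psi_t^-(y) - \Psi_t(y) = \pi_t(\set{F=y})$. Define $y_\eps(t) \eqdef \inf\set{y : \Psi_t(y) \leq \eps}$. By right-continuity of $\Psi_t$ one has $\Psi_t(y_\eps(t)) \leq \eps$, while $\Psi_t^-(y_\eps(t)) \geq \eps$, so $\eps$ lies in the (possibly degenerate) interval $[\Psi_t(y_\eps(t)),\Psi_t^-(y_\eps(t))]$. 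On the slice $\set{F = y_\eps(t)}$, the hypothesis that $G\sharp\pi_t$ is Lebesgue forces the conditional law of $G$ there to have no atoms (indeed it forces $\pi_t(\set{G \in B}) = \lambda(B)$ globally, hence the restriction to $\set{F = y_\eps(t)}$ is dominated by Lebesgue and atomless), so the map $z \mapsto \pi_t(\set{F = y_\eps(t)} \cap \set{G \geq z})$ is continuous and non-increasing from $\pi_t(\set{F = y_\eps(t)})$ down to $0$; hence there is a unique $z_\eps(t)$ making the total mass exactly $\eps$. Uniqueness of the pair follows since $y_\eps(t)$ is forced by the requirement that the mass equal $\eps$ (any strictly larger $y$ gives mass $< \Psi_t^-(y) \leq \Psi_t(y_\eps(t)) \le \eps$ only if... — one checks both $\Psi_t(y) < \eps$ for $y > y_\eps(t)$ in the non-degenerate case, forcing $y = y_\eps(t)$), and then $z_\eps(t)$ is forced by strict monotonicity on the slice.

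For the measurability in $t$, I would use the standard fact that if $(y,t) \mapsto \Psi_t(y)$ is jointly measurable (which holds because $t \mapsto \pi_t(\set{F > y})$ is measurable for each $y$ and the function is monotone in $y$, so one can write it as a countable sup/inf over rational $y$), then $y_\eps(t) = \inf\set{q \in \Q : \Psi_t(q) \leq \eps}$ is a countable infimum of measurable functions of $t$ (on the set where $\Psi_t(q) \le \eps$, which is measurable), hence measurable. Once $y_\eps(\cdot)$ is measurable, the function $t \mapsto \pi_t(\set{F = y_\eps(t)} \cap \set{G \geq z})$ is measurable in $t$ for each fixed rational $z$ — this uses measurability of $t \mapsto \pi_t(A_t)$ for the measurable-in-$(x,t)$ set $\set{(x,t) : F(x) = y_\eps(t),\ G(x) \geq z}$, which follows from $y_\eps$ being measurable and Fubini-type measurability of kernels — and then $z_\eps(t) = \sup\set{q \in \Q \cap [0,1] : \pi_t(\set{F = y_\eps(t)} \cap \set{G \geq q}) \geq \eps - \Psi_t(y_\eps(t))}$ exhibits $z_\eps$ as a countable supremum of measurable functions.

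I expect the main obstacle to be the joint-measurability bookkeeping in the second part: specifically, verifying that $t \mapsto \pi_t\big(\set{x : F(x) = y_\eps(t)} \cap \set{x : G(x) \geq z}\big)$ is measurable, since here the set whose $\pi_t$-mass is being measured itself depends on $t$ through $y_\eps(t)$. This is handled by noting that $\set{(x,t) : F(x) = y_\eps(t),\, G(x) \geq z}$ is a measurable subset of $E \times F$ (as $F$, $G$, and $y_\eps$ are all measurable, $y_\eps$ composed with the projection is measurable on the product), and then invoking the measurability of $t \mapsto \int \one_{B_t}(x)\,\pi_t(\d x)$ for product-measurable $B$ — a standard consequence of the monotone class theorem applied to the family of such $B$ for which the integral is measurable in $t$. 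The remaining verifications (right-continuity of $\Psi_t$, the atomless conclusion on the slice from the global Lebesgue hypothesis on $G\sharp\pi_t$, and the pointwise uniqueness) are routine and I would state them briefly rather than belabor them.
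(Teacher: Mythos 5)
Your proposal is correct and follows essentially the same route as the paper's proof: pointwise existence and uniqueness from monotonicity of the tail mass in $(y,z)$ together with continuity in $z$ coming from atomlessness of $G\sharp\pi_t$, and then measurability by reduction to a countable grid (your $\inf/\sup$ over rationals plays exactly the role of the paper's dyadic approximants), with the one delicate point --- measurability of $t\mapsto \pi_t\bigl(\{F=y_\eps(t)\}\cap\{G\ge z\}\bigr)$ --- handled by the same kernel/product-measurability argument the paper invokes. The only caveat is that strict monotonicity in $z$ on the slice $\{F=y_\eps(t)\}$, which you (and the paper) use for uniqueness of $z_\eps(t)$, does not literally follow from $G\sharp\pi_t$ being Lebesgue, so your write-up is at the same level of rigor as the paper's on that point.
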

\begin{proof}
 Let us denote
 $$
 p_t(y,z) \eqdef \pi_t\p{\set{ F > y} \cup \set{F = y \,\&\, S \geq z }}.
 $$
 By a routine argument of measure theory, $(t,z,y) \mapsto p_t(x,y)$ is a measurable map as a bounded pointwise limit of simple functions in tensorial form. The map $(y,z) \mapsto p_t(y,z)$ is decreasing for the lexicographic order, and $z \mapsto p_t(y,z)$ is continuous and strictly decreasing since, by assumption, $S\sharp \pi_t$ is atomless. $y_\eps(t)$ can thus be defined as the unique $y$ such that $ \lim_{y+} p_t( \, . \, ,1) > \eps \leq p_t(y,1) $, and $z_\eps(t)$ the unique $z$ such that $p_t(y_\eps(t),z)=\eps$. This shows existence and uniqueness. \medskip
 
 In order to prove the measurability of $y_\eps,z_\eps$, it suffices to show that they are the monotone limit of measurable simple functions. \medskip
 
For each $k \in N$, let us denote the unique pair $(y_\eps^k(t),z_\eps^k(t))$ in $2^{-k} \Z \times 2^{-k} [0,\ldots,2^{k}]$ defined by
$$
p_t(y_\eps^k(t),1) \leq \eps <   p_t(y_\eps^k(t)-2^{-k},1),
$$
as well as
$$
p_t(y_\eps(t),z_\eps^k) \leq \eps <   p_t(y_\eps(t),z_\eps^k(t)-2^{-k}).
$$
By construction, $k \mapsto y^k_\eps(t) $ and $k \mapsto z_\eps^k(t)$ are decreasing maps, and by $\sigma$-additivity, their respective infima are given by $y_\eps(t)$ and $z_\eps(t)$. \medskip

We now claim that $t \mapsto y^k_\eps(t)$ is measurable. Since the latter takes its values in a countable space, it suffices to show that the set $A_{y_0} \eqdef \set{t: \, y^k_\eps(t) = y_0}$ is measurable for each $y_0 \in \R$. But by definition $$A_{y_0} = \set{t: \, p_t(y_0,1) \leq \eps <   p_t(y_0-2^{-k},1) },$$ 
which is indeed measurable since $t \mapsto p_t(y,z)$ is measurable. As a consequence, $t \mapsto y^\eps_t$ is measurable as a decreasing limit of simple measurable functions. \medskip

We finally claim that $t \mapsto z^k_\eps(t)$ is also measurable. The argument works as in the paragraph above, except that we now use the measurability of the map $t \mapsto p_t(y_\eps(t),z)$.


\end{proof}

\section*{Acknowledgement} This work has been partially supported by ANR SINEQ, ANR-21-CE40-0006.

\bibliographystyle{plain}
\bibliography{mathias}
\end{document}